\newcommand\dC{\mathbb{C}}
\newcommand\dN{\mathbb{N}}
\newcommand\dZ{\mathbb{Z}}
\newcommand\cK{\mathcal{K}}
\newcommand\cO{\mathcal{O}}
\newcommand\cS{\mathcal{S}}
\DeclareMathOperator{\re}{Re}
\DeclareMathOperator{\im}{Im}
\DeclareMathOperator{\dens}{dens}
\DeclareMathOperator{\meas}{meas}
\DeclareMathOperator{\dist}{dist}
\DeclareMathOperator{\diam}{diam}
\DeclareMathOperator{\pack}{pack}
\DeclareMathOperator{\ann}{ann}
\newtheorem{thm}{Theorem}[section]
\newtheorem{lemma}[thm]{Lemma}
\newtheorem{cor}[thm]{Corollary}
\newtheorem{example}[thm]{Example}
\theoremstyle{definition}
\newtheorem*{remark}{Remark}
\begin{document}
\title{Exponential polynomials with Fatou and non-escaping sets of finite Lebesgue measure}
\author{Mareike Wolff}
\date{}
\maketitle

\begin{abstract}
\noindent We give conditions ensuring that the Fatou set and the complement of the fast escaping set of an exponential polynomial $f$ have finite Lebesgue measure. Essentially, these conditions are designed such that $|f(z)|\ge\exp(|z|^\alpha)$ for some $\alpha>0$ and all $z$ outside a set of finite Lebesgue measure.
\end{abstract}

\section{Introduction and results}

Let $f$ be a transcendental entire function, and let  $f^n$ denote the $n$-th iterate of $f$. The \textit{Fatou set} $F(f)$ of $f$ is  the set of all $z\in\dC$ such that the iterates $(f^n)_{n\in\dN_0}$ form a normal family in a neighborhood of $z$, and the \textit{Julia set} $J(f)$ is the complement of $F(f)$. These sets play an important role in complex dynamics. Clearly, $F(f)$ is open, and $J(f)$ is closed. Moreover, $J(f)$ is always non-empty, and either $J(f)=\dC$, or $J(f)$ has empty interior. An introduction to the dynamics of transcendental entire functions can be found in \cite{bergweiler2}. The \textit{escaping set} of $f$ is defined by
$$I(f):=\{z:\,f^n(z)\to\infty\text{ as }n\to\infty\}.$$
Eremenko \cite{eremenko} showed that $I(f)$ is always non-empty, and that $J(f)=\partial I(f)$. For $r>0$, let $M(r,f):=\max_{|z|=r}|f(z)|$ denote the \textit{maximum modulus} of $f$, and let $M^n(r,f)$ be its $n$-th iterate with respect to $r$. The \textit{fast escaping set} $A(f)$ is a subset of the escaping set. It was introduced by Bergweiler and Hinkkanen \cite{bergweiler-hinkkanen}, and is defined by
$$A(f):=\{z:\, \text{there exists }l\in\dN\text{ such that }|f^n(z)|\ge M^{n-l}(R,f)\text{ for }n>l\},$$
where $R$ is chosen such that $M(r,f)>r$ for $r\ge R$.\\

We are interested in the Lebesgue measure of the sets defined above. McMullen \cite{mcmullen} showed that the Julia set of $f(z)=\sin(az+b),\,a\ne0$, has positive Lebesgue measure. In fact, it can be seen from the proof that also $J(f)\cap A(f)$ has positive measure. Sixsmith \cite{sixsmith} proved that if $f(z)=\sum_{j=1}^qa_j\exp\left(\omega_q^jz\right)$, where $q\ge2,\,a_j\in\dC\setminus\{0\}$, and $\omega_q=\exp(2\pi i /q)$, then $J(f)\cap A(f)$ has positive measure. Sixsmith remarked without proof that his result remains true for 
\begin{equation}  \label{eq_sixsmith}
f(z)=\sum_{j=1}^qa_j\exp(b_jz),
\end{equation}
 where $q\ge3$, $a_j,b_j\in\dC\setminus\{0\}$, $\arg(b_j)<\arg(b_{j+1})<\arg(b_j)+\pi$ for $j\in\{1,...,q-1\}$, and $\arg(b_q)>\arg(b_1)+\pi$, with the argument chosen in $[0,2\pi)$. Bergweiler and Chyzhykov \cite{bergweiler-chyzhykov} gave conditions ensuring that the Julia set and the escaping set of a transcendental entire function of completely regular growth have positive measure. These conditions are satisfied for the functions \eqref{eq_sixsmith}. In fact, they are also satisfied if one allows $\arg(b_{j+1})=\arg(b_j)+\pi$ for some $j\in\{1,...,q-1\}$ or $\arg(b_q)=\arg(b_1)+\pi$. Further criteria for Julia sets and (fast) escaping sets to have positive measure are given in \cite{aspenberg-bergweiler, bergweiler}.\\
 
For certain functions it is possible to obtain stronger results in the sense that one can bound the size of the complement of the Julia set or (fast) escaping set. Schubert \cite{schubert} used McMullen's methods to prove that if $f(z)=\sinh(z)$, then the Lebesgue measure of $F(f)$ and $\dC\setminus I(f)$ is finite in any horizontal strip of width $2\pi$. In fact, the proof shows that one may replace $I(f)$ by the fast escaping set $A(f)$ here. Schubert's result was generalized by Zhang and Yang \cite{zhang-yang} to functions of the form $f(z)=P(e^z)/e^z$, where $P$ is a polynomial of degree at least $2$ satisfying $P(0)\ne0$.\\

There seem to be no papers whose main aim is to show that the Lebesgue measure of the Fatou set or the complement of the (fast) escaping set of certain transcendental entire functions is finite. However, there are some results occurring in papers mainly treating a different subject. We mention two of them. Hemke \cite[Theorem 5.1]{hemke} showed that if 
\begin{equation}    \label{eq_hemke}
f(z)=Q_1(z)\exp(P(z))+Q_2(z)\exp(-P(z)),
\end{equation}
where $P, Q_1, Q_2$ are polynomials with $Q_1,Q_2\not\equiv0$ and $\deg(P)\ge3$, then the Lebesgue measure of $\dC\setminus I(f)$ is finite. One example for such a function is $f(z)=\sin(z^3)$. A result of Bock \cite[Example 2]{bock} says that if $f(z)=\sin(\pi z)$, then $F(f)=\emptyset$, and $(f^n(z))$ tends to infinity for almost all $z\in\dC$. 

This is different for $f(z)=\sin(z)$, which is conjugate to the function $\sinh(z)$ considered by Schubert. From $f(0)=0$ and $f'(0)=1$ it follows that $F(f)\ne\emptyset$, and that there exists  a component of $F(f)$ where $(f^n(z))$ tends to zero. Since $F(f)$ is open and $2\pi$-periodic, the Lebesgue measure of $F(f)$ and $\dC\setminus I(f)$ is infinite. Also, the Fatou set and non-escaping set of $f(z)=\sin(z^2)$ have infinite measure. To see this, note that $f$ has a superattracting fixed point at zero. Let $\varepsilon >0$ such that $D(0,\varepsilon)$ is contained in the attractive basin of zero. There exists $\delta\in(0,\pi/2)$ such that $\sin(D(\pi k,\delta))\subset D(0,\varepsilon)$ for all $k\in\dZ$. Let $D_k:=D(\pi k,\delta)$ and $p(z)=z^2$. Then $p^{-1}(D_k)$ is contained in the attractive basin of zero of the function $f$. For a measurable set $A\subset\dC$ let $\meas(A)$ denote the Lebesgue measure of $A$. We get
$$\meas(p^{-1}(D_k))=2\int_{D_k}\left(\frac{1}{2\sqrt{|z|}}\right)^2d(x,y)\ge\frac{1}{2(|k|\pi+\delta)}\meas(D_k)=\frac{\pi\delta^2}{2(|k|\pi+\delta)}.$$
Summing up over all $k$ yields that the attractive basin of zero has infinite measure. See Figure \ref{fig_sin} for an illustration of the non-escaping sets of $\sin(z)$, $\sin(z^2)$, and $\sin(z^3)$.

\begin{figure}[ht]     
\centering 
\includegraphics[width=\textwidth]{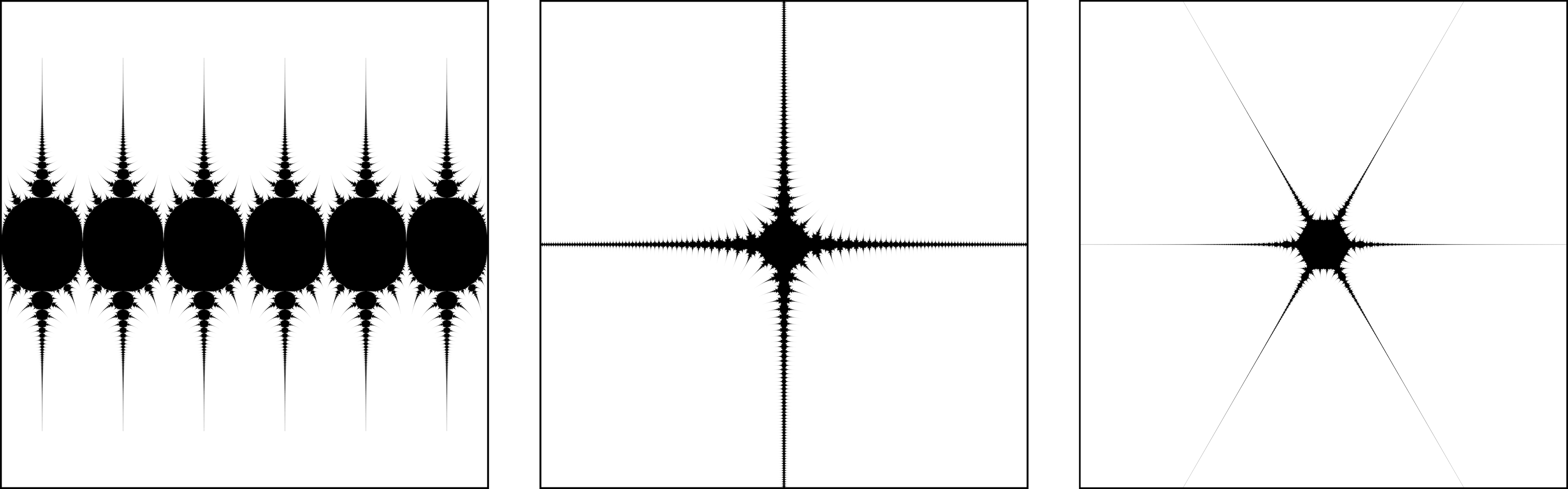}

\caption{The non-escaping sets of $\sin(z)$, $\sin(z^2)$, and $\sin(z^3)$.}
\label{fig_sin}
\end{figure}

In this paper, we consider \textit{exponential polynomials} of the form 
$$f(z)=\sum_{j=1}^N Q_j(z)\exp(b_jz^d+P_j(z)),$$
where $Q_j$ and $P_j$ are polynomials with $\deg(P_j)<d$. We give conditions ensuring that the Lebesgue measure of the complement of $J(f)\cap A(f)$ is finite.

\begin{thm}    \label{thm_meas_short}
Let 
$$f(z):=\sum_{j=1}^N Q_j(z)\exp(b_jz^d+P_j(z)),$$
where $d\in\dN$ with $d\ge3$, $P_j$ and $Q_j$ are polynomials with $Q_j\not\equiv0$ and $\deg(P_j)<d$, and $b_j\in\dC\setminus\{0\}$ are distinct numbers satisfying $\arg(b_j)\le\arg(b_{j+1})<\arg(b_j)+\pi$ for all $j\in\{1,...,N-1\}$ and $\arg(b_N)>\arg(b_1)+\pi$, with the argument chosen in $[0,2\pi)$.
Then the Lebesgue measure of $\dC\setminus(A(f)\cap J(f))$ is finite.
\end{thm}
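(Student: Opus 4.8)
\textit{Sketch of the intended proof.} The heart of the matter is the pointwise estimate announced in the abstract. I would first show that there exist $c>0$, $R_0>0$ and a measurable set $E\subset\dC$ with $\meas(E)<\infty$ such that
\begin{equation}\label{eq_plan_lb}
|f(z)|\ge\exp\bigl(c|z|^d\bigr)\quad\text{and}\quad|f'(z)|\ge|z|^{d-1}\exp\bigl(c|z|^d\bigr)\qquad\text{for }z\in\dC\setminus E.
\end{equation}
The hypothesis on the $b_j$ is exactly what forces the indicator $h(\phi):=\max_{1\le j\le N}|b_j|\cos(\phi+\arg b_j)$ to be bounded away from $0$: the conditions say that the arguments $\arg b_j$, read cyclically on $\dR/2\pi\dZ$, have all their gaps strictly below $\pi$ (the first $N-1$ conditions give the consecutive gaps, while $\arg b_N>\arg b_1+\pi$ gives the wrap-around gap $2\pi-(\arg b_N-\arg b_1)<\pi$), whence $h(\phi)\ge c_0>0$ for all $\phi$. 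Writing $z=re^{i\theta}$, the term of $f$ dominating at $z$ is the one attaining $\re(b_jz^d)=r^d h(d\theta)$, the exponents $P_j$ ($\deg P_j<d$) and the polynomials $Q_j$ being negligible. Off the finitely many \emph{critical rays} — those $\theta$ at which two indices tie for the maximum of $h(d\theta)$ — a single term beats the rest by a factor at least $\exp(\eta r^d)$, so \eqref{eq_plan_lb} is immediate there (with $c$ any number $<h(d\theta)$, uniformly $\ge c_0/2$). Near a critical ray $\rho$, with co-dominant indices $j$ and $k$, I would write $f(z)=Q_j(z)e^{b_jz^d+P_j(z)}\bigl(1+g(z)+o(1)\bigr)$, where $g=\tfrac{Q_k}{Q_j}e^{(b_k-b_j)z^d+P_k-P_j}$, and note that \eqref{eq_plan_lb} can fail only in small neighbourhoods of the zero sets of $1+g$ (for $|f|$) and of $f'$ (for $|f'|$). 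Since $\log(-g)$ agrees to leading order with the polynomial $(b_k-b_j)z^d$, whose derivative has modulus $\asymp|z|^{d-1}$, these "cancellation blobs'' lie along $\rho$, are spaced $O(1)$ apart in $\im\bigl((b_k-b_j)z^d\bigr)$ — so $O(2^{nd})$ of them lie in each dyadic annulus $\{2^n\le|z|<2^{n+1}\}$ — and each has area $O\bigl(\exp(-\gamma r^d)\,|z|^{-2(d-1)}\bigr)$ for a suitable $\gamma>0$. Hence the blobs in the $n$-th annulus have total area $O\bigl(2^{n(2-d)}\exp(-\gamma 2^{nd})\bigr)$, which is summable because $d\ge3$. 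Taking for $E$ the disk $\overline{D(0,R_0)}$ together with all of these blobs (and the correspondingly small cancellation sets arising where several of the $b_j$ are nearly aligned) gives $\meas(E)<\infty$ and \eqref{eq_plan_lb}. Carrying this out carefully — the sector decomposition, the behaviour near the critical rays, the bookkeeping of the blobs — is, I expect, the longest part of the write-up, although it is conceptually standard.

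I would then convert \eqref{eq_plan_lb} into a statement about fast escape. Since $f$ has order $d$, one has $\log M(r,f)\le(\beta+\varepsilon)r^d$ for $r\ge R_0$, where $\beta=\max_j|b_j|$ and $R_0$ is enlarged so that also $M(r,f)>r$ there; put $R:=R_0$ and fix the margin $\tau:=\tfrac1d\log\tfrac{2(\beta+\varepsilon)}{c}>0$. \emph{Claim:} if $f^n(z)\notin E$ for all $n\ge l$ and $|f^l(z)|\ge Re^{\tau}$, then $z\in A(f)$. Indeed $f^n(z)\notin E$ forces $|f^n(z)|>R_0$, so \eqref{eq_plan_lb} gives $b_{n+1}\ge c\,e^{db_n}$ for $n\ge l$, where $b_n:=\log|f^n(z)|$; writing $\ell_n:=\log M^n(R,f)$ one has $\ell_{n+1}\le(\beta+\varepsilon)e^{d\ell_n}$, and a short induction — the choice of $\tau$ makes $c\,e^{d\tau}>2(\beta+\varepsilon)$, which absorbs the additive $\tau$ once $\ell_0=\log R$ is large — yields $b_{l+k}\ge\ell_k+\tau$, i.e.\ $|f^{l+k}(z)|\ge M^k(R,f)$, for all $k\ge0$; that is exactly $z\in A(f)$. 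Conversely, if no such $l$ exists then the orbit meets $E$ infinitely often, since applying \eqref{eq_plan_lb} once just after the orbit leaves $E$ already pushes $|f^{\cdot}(z)|$ above $Re^{\tau}$. Hence
$$\dC\setminus A(f)\ \subseteq\ B:=\bigl\{z:\ f^n(z)\in E\text{ for infinitely many }n\bigr\},$$
and since $F(f)$ contributes no further measure (any Fatou component disjoint from $A(f)$ lies in $B$, and fast escaping Fatou components do not occur for $f$ of finite order), it is enough to prove $\meas(B)<\infty$.

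For this I would run a McMullen–Schubert type argument driven by the strong expansion \eqref{eq_plan_lb}. That estimate makes the "pushed-forward density'' $h_E(w):=\sum_{f(\zeta)=w,\ \zeta\notin E}|f'(\zeta)|^{-2}$ bounded by a constant $\kappa=\kappa(R_0)$ — as small as we like if $R_0$ is large — uniformly in $w$ (the $\asymp2^{nd}$ solutions of $f(\zeta)=w$ in each dyadic annulus each contribute $\lesssim(2^{n(d-1)}e^{c2^{nd}})^{-2}$, and $\sum_n 2^{n(2-d)}e^{-2c2^{nd}}<\infty$), hence $\meas\bigl(f^{-1}(X)\setminus E\bigr)\le\kappa\,\meas(X)$ for every measurable $X$. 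Write $B=B_1\cup B_2$, where $B_1$ is the set of $z$ whose orbit does not tend to $\infty$ and $B_2$ the set where it does but still meets $E$ infinitely often. Since $\{|f|\le R_0\}\subseteq E$, a non-escaping orbit must reach $\overline{D(0,R_0)}$ at some first time $n$; the $n-1$ preceding steps are forced to lie in $\dC\setminus E$, so the per-step factor $\kappa$ gives $\meas\{z\in B_1:\ \text{first hitting time}=n\}\le\kappa^{\,n-1}\meas(E)$ and thus $\meas(B_1)\le\pi R_0^2+\meas(E)/(1-\kappa)<\infty$. For $B_2$: once the orbit is past $\overline{D(0,R_0)}$ for good, its returns to $E$ are returns to the cancellation blobs, between which it expands; covering $\dC\setminus E$ by "boxes'' of size $\asymp|z|^{-(d-1)}$ on which $f$ is univalent with bounded distortion and whose image is a huge disk meeting $E$ in relative measure tending to $0$, and iterating the resulting estimate along such orbits, shows that $B_2$ is null. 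Altogether $\meas(B)<\infty$, and therefore $\meas\bigl(\dC\setminus(A(f)\cap J(f))\bigr)<\infty$. The principal obstacle, as always in this circle of ideas, is the measure-theoretic bookkeeping of this last step — arranging the boxes so that univalence and bounded distortion hold at the $z\mapsto z^d$ scale, keeping the relative-measure estimate uniform, and controlling how the orbit meets the (possibly degenerate) cancellation structure near the critical rays.
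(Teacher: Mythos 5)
Your overall plan --- a pointwise lower bound $|f|\ge\exp(c|z|^d)$ outside a set $E$ of finite measure, followed by a McMullen-type expansion argument --- does capture the method of the paper. The paper implements the first half more systematically, defining exceptional sets $E_1\subset E_2$ as $\bigcup_{j\ne k}P_{j,k}^{-1}(U_l)$ where $U_l$ is a thin neighbourhood of the imaginary axis, and proving $\meas(E_2)<\infty$ by a pullback estimate, rather than tracking ``cancellation blobs'' near the zeros of $1+g$ and $f'$. But there are two genuine gaps in the second half of your argument.

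First, and most importantly, your argument places points in $A(f)$ but never in $J(f)$. You dismiss $A(f)\cap F(f)$ with the assertion that ``fast escaping Fatou components do not occur for $f$ of finite order''; this is not a theorem --- there is no known result ruling out simply connected fast escaping wandering domains for finite-order entire functions --- and the conclusion would be unavailable without such an ingredient. The paper gets into the Julia set by a completely different route: it verifies Sixsmith's criterion $|z_nf'(z_n)/f(z_n)|\ge\lambda>1$ along the orbits it constructs (Lemma 4.2), so those points lie in $J(f)$ unless they are in a multiply connected Fatou component, and then invokes Zheng's theorem (Lemma 4.3) that exponential polynomials have no multiply connected Fatou components. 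Without some such argument you have bounded only $\meas(\dC\setminus A(f))$, not $\meas(\dC\setminus(A(f)\cap J(f)))$.

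Second, the $B_1$ step is flawed. You assert that a non-escaping orbit ``must reach $\overline{D(0,R_0)}$'' and that ``the $n-1$ preceding steps are forced to lie in $\dC\setminus E$''; neither holds. The set $E$ is much larger than $\overline{D(0,R_0)}$: an orbit can land in a cancellation blob far from the origin, where $|f|$ may take any value below $\exp(c|z|^d)$ and in particular can still exceed $R_0$, and then land in another blob, and so on, without ever entering the disk. Even when the orbit does eventually hit the disk, nothing forces the preceding iterates to avoid the blobs, so the per-step factor $\kappa$ does not apply. Your pushed-forward-density bound is valid but controls pullbacks only along $\dC\setminus E$, which is exactly what you cannot guarantee here. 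The paper avoids the split into $B_1$ and $B_2$ entirely: it runs a single nested-squares construction inside each good square $S_0$, keeping at stage $n$ the part whose $n$-th image lies in a good square, shows via the Koebe distortion theorem that the discarded density at step $n$ is at most $C\exp\bigl(-\tfrac12(E_\alpha^n(\min_{z\in S_0}|z|))^\alpha\bigr)$, and sums the resulting series; the limit set is placed in $A(f)\cap J(f)$ by the two lemmas cited above.
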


Note that the conditions on the $b_j$ imply that $N\ge3$. Recall that Sixsmith's result for the functions \eqref{eq_sixsmith} remains true if $\arg(b_{j+1})=\arg(b_j)+\pi$ for some $j\in\{1,...,q-1\}$ or $\arg(b_{q})=\arg(b_1)+\pi$. This is not true in general for Theorem \ref{thm_meas_short}, as the following example shows.

\begin{example}     \label{ex_counterex}
Let 
$$h(z):=\frac{1}{2}\exp(z^3+iz)-\frac{1}{2}\exp(-z^3+iz)=\exp(iz)\sinh(z^3).$$
Then $h$ has a superattracting fixed point at zero, and the attractive basin of zero has infinite Lebesgue measure. In particular, the Lebesgue measure of $\dC\setminus(A(h)\cap J(h))$ is infinite.
\end{example}

We will verify this in Section \ref{sec_counterexample}. However, under certain additional conditions on the polynomials $P_j$, the statement of Theorem \ref{thm_meas_short}  remains true if $\arg(b_{j+1})=\arg(b_j)$ for some $j\in\{1,...,N-1\}$ or $\arg(b_N)=\arg(b_1)+\pi$. This is the following result.

\begin{thm}   \label{thm_meas_detailed}
Let 
$$f(z):=\sum_{j=1}^N Q_j(z)\exp(b_jz^d+P_j(z)),$$
where $d\in\dN$ with $d\ge3$, $P_j$ and $Q_j$ are polynomials with $Q_j\not\equiv0$ and $\deg(P_j)<d$, and $b_j\in\dC\setminus\{0\}$ are distinct numbers satisfying $\arg(b_j)\le\arg(b_{j+1})\le\arg(b_j)+\pi$ for all $j\in\{1,...,N-1\}$ and $\arg(b_N)\ge\arg(b_1)+\pi$, with the argument chosen in $[0,2\pi)$.

If there exists $j\in\{1,...,N-1\}$ such that $\arg(b_{j+1})=\arg(b_{j})+\pi$, or if $\arg(b_N)=\arg(b_1)+\pi$, in addition suppose that 
there are $k,l \in\{1,...,N\}$ with $\arg(b_k)=\arg(b_j)$ and $\arg(b_l)=\arg(b_{j+1})$, or $\arg(b_k)=\arg(b_1)$ and $\arg(b_l)=\arg(b_N)$, respectively, such that
the polynomials $P_k,P_l$ can be written in the form 
\begin{equation}    \label{eq_extra-condition}
P_k(z)=b_kg(z)+g_k(z)\quad\text{and}\quad P_l(z)=b_lg(z)+g_l(z)
\end{equation}
 with polynomials $g,g_k,g_l$ satisfying $\deg(g)\le d-1$ and $\max\{\deg(g_k),\,\deg(g_l)\}\le d-3$.
 
 Then the Lebesgue measure of $\dC\setminus (A(f)\cap J(f))$ is finite.
\end{thm}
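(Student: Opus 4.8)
The plan is to reduce Theorem~\ref{thm_meas_detailed} to a single growth estimate and then run a McMullen-type dynamical argument. First I would prove: \emph{there are $\alpha>0$ and $E\subset\dC$ with $\meas(E)<\infty$ such that $|f(z)|\ge\exp(|z|^\alpha)$ for all $z\in\dC\setminus E$.} Note that Theorem~\ref{thm_meas_detailed} contains Theorem~\ref{thm_meas_short}: if no $\arg(b_{j+1})$ equals $\arg(b_j)+\pi$ and $\arg(b_N)\ne\arg(b_1)+\pi$ the extra hypothesis is vacuous, so it suffices to treat the general case.

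For the growth estimate, write $T_j(z)=Q_j(z)\exp(b_jz^d+P_j(z))$, so that $f=\sum_jT_j$ and $u_j(z):=\log|T_j(z)|=|b_j|\,|z|^d\cos(\arg b_j+d\arg z)+O(|z|^{d-1})$, and let $h(\theta):=\max_j|b_j|\cos(\arg b_j+d\theta)$ be the indicator. An elementary computation with the half-circles $\{\psi:\cos(\arg b_j+\psi)>0\}$ shows that the hypotheses on the $\arg(b_j)$ force $h\ge0$ on the circle, with $h>0$ except at finitely many ``exceptional directions'' $\theta_0$, where only indices $m$ with $\cos(\arg b_m+d\theta_0)=0$ attain the maximum; such $\theta_0$ occur exactly when $\arg(b_{j+1})=\arg(b_j)+\pi$ for some $j$ or $\arg(b_N)=\arg(b_1)+\pi$, and then the two indices binding the maximum at $\theta_0$ are antipodal, with arguments $\arg(b_j),\arg(b_{j+1})$ (resp.\ $\arg(b_1),\arg(b_N)$). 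I would then fix $\alpha$ with $\max\{0,d-3\}<\alpha<d-2$ — an interval that is nonempty precisely because $d\ge3$ — and take $E$ to consist of $\overline{D(0,R_0)}$, the ``bisector strips'' $S_{jk}=\{|z|\ge R_0:|u_j(z)-u_k(z)|\le|z|^\alpha\}$ for $j\ne k$, and, around each exceptional $\theta_0$ with its polynomials $g,g_k,g_l$ from \eqref{eq_extra-condition}, the strip $\Sigma_{\theta_0}=\{|z|\ge R_0:|\re(b_k(z^d+g(z)))|\le 2|z|^\alpha\}$. The role of $d\ge3$ is to make $\meas(E)$ finite: passing to $w=(b_j-b_k)z^d$ one has $dx\,dy\asymp|w|^{-2+2/d}\,du\,dv$ and $S_{jk}$ becomes $\{|\re w|\lesssim|w|^{\alpha/d}\}$, which has finite measure iff $\alpha<d-2$, while $\Sigma_{\theta_0}$ is a strip of spatial width $\asymp|z|^{\alpha-d+1}$ about the star $\{\re(b_kz^d)=0\}$, again of finite measure iff $\alpha<d-2$.

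Next I would check $|f(z)|\ge\exp(|z|^\alpha)$ for $z\notin E$, $|z|$ large. Since $z\notin\bigcup S_{jk}$ the $u_j(z)$ are pairwise $|z|^\alpha$-separated, so a single $T_{j_0}$ dominates and $\log|f(z)|\ge u_{j_0}(z)-\log2$. If $\arg z$ is at distance $\ge\delta$ from every exceptional direction, then $h(\arg z)\ge c>0$ and $u_{j_0}(z)\ge\tfrac c2|z|^d\ge|z|^\alpha$. If $\arg z$ is within $\delta$ of an exceptional $\theta_0$, the dominant index has argument $\arg(b_k)$ or $\arg(b_l)$; by \eqref{eq_extra-condition}, $u_k(z)=\re(b_k(z^d+g(z)))+\re g_k(z)+\log|Q_k(z)|=\re(b_k(z^d+g(z)))+O(|z|^{d-3})$ and, symmetrically, $u_l(z)=-\mu\,\re(b_k(z^d+g(z)))+O(|z|^{d-3})$ with $\mu=|b_l/b_k|$; since $z\notin\Sigma_{\theta_0}$ one has $|\re(b_k(z^d+g(z)))|>2|z|^\alpha$, which beats the $O(|z|^{d-3})$ error because $\alpha>d-3$, so again $\log|f(z)|\ge|z|^\alpha$ (same-argument terms of smaller modulus can only fail to be dominated inside some already-removed $S_{jk}$, and terms of other arguments have $u_m(z)\le-c'|z|^d$ near $\theta_0$). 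For the dynamical part I would argue as in \cite{mcmullen,schubert,bergweiler-chyzhykov}: a logarithmic-derivative estimate shows $|f'(z)|\ge|z|^{d-1}e^{|z|^\alpha}$ off $E$, so $f$ is strongly expanding there; an area/distortion argument then gives $\meas(\dC\setminus X)<\infty$ for $X:=\{|z|\ge R_0\}\setminus\bigcup_n f^{-n}(E\cup\overline{D(0,R_0)})$, and for $z\in X$ the orbit stays off $E$ with $|f^{n+1}(z)|\ge\exp(|f^n(z)|^\alpha)$; since outside the thin sectors of angular width $\asymp|z|^{-1}$ around the exceptional directions one actually has $\log|f(z)|\gtrsim\dist(\arg z,\{\text{exceptional directions}\})\,|z|^d$, a Borel--Cantelli argument (using $\sum_n|f^n(z)|^{-1}<\infty$) upgrades this to $|f^{n+1}(z)|\ge\exp(c|f^n(z)|^d)$ for all large $n$; comparing with $\log M(r,f)=O(r^d)$ and the characterisation of $A(f)$ from \cite{bergweiler-hinkkanen} yields $z\in A(f)$. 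Thus $\meas(\dC\setminus A(f))<\infty$; and since $f$ lies in class $\cB$ (all its exponents are non-constant), $I(f)\subseteq J(f)$, so $A(f)\cap F(f)=\emptyset$ and $\dC\setminus(A(f)\cap J(f))=\dC\setminus A(f)$ has finite measure.

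The hard part will be the growth estimate near the exceptional directions, i.e.\ making the use of \eqref{eq_extra-condition} precise: one must show that, after factoring $e^{b_kg}$ out of the two antipodal binding terms, $T_k$ and $T_l$ genuinely control $|f|$ on both half-sectors $\pm\,\re(b_k(z^d+g))>0$ — the remaining terms being either exponentially dominated or dangerous only inside the already-removed bisector strips — and that the error polynomials $g_k,g_l$, of degree $\le d-3$, are negligible off $\Sigma_{\theta_0}$ while $\Sigma_{\theta_0}$ itself keeps finite measure. This rests exactly on the chain of inequalities $d-3<\alpha<d-2$, i.e.\ on $d\ge3$ together with the degree bound in \eqref{eq_extra-condition}; it is also precisely what is violated in Example~\ref{ex_counterex}, where $P_1=P_2=iz$ cannot be written as $g\pm\text{const}$. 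A secondary technical difficulty is the dynamical bookkeeping — estimating the measure of the backward orbit of $E$ under the strongly expanding $f$, and reconciling the sometimes-only-$\exp(|z|^\alpha)$ growth near exceptional directions with membership in $A(f)$.
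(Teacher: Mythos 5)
Your overall architecture — cut out an exceptional set of finite measure designed so that $|f|\ge\exp(|z|^\alpha)$ off it, establish distortion control, then run a McMullen-type nested construction — is exactly the paper's. Your bisector strips $S_{jk}=\{|u_j-u_k|\le|z|^\alpha\}$ are, up to lower-order terms, the sets $P_{j,k}^{-1}(U_1)$ used in the paper (which takes $\nu=d-\tfrac52\in(d-3,d-2)$, matching your window for $\alpha$), and your extra strips $\Sigma_{\theta_0}$ are in fact redundant: near an exceptional direction $u_k-u_l=(1+|b_l/b_k|)\re\bigl(b_k(z^d+g(z))\bigr)+O(|z|^{d-3})$ by \eqref{eq_extra-condition}, so $z\notin S_{kl}$ already forces $|\re(b_k(z^d+g))|\gtrsim|z|^\alpha$. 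That part of the plan is sound and essentially identical to Lemmas~\ref{lemma_measE2}--\ref{lemma_flarge}.

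However, the way you dispose of the Julia set has a genuine gap. You assert that $f\in\cB$ ``because all its exponents are non-constant,'' and deduce $A(f)\subset I(f)\subset J(f)$. Non-constancy of the exponents has nothing to do with boundedness of the singular set, and exponential polynomials are in general \emph{not} in class $\cB$: for instance $e^{z}\sin z$ has critical points $-\pi/4+k\pi$ with critical values $\pm e^{-\pi/4+k\pi}/\sqrt2\to\infty$, and analogous unbounded sequences of critical values can occur for sums of the form covered by the theorem. Without $\cB$ you have no argument at all that the constructed set lies in $J(f)$. The paper handles this by combining Sixsmith's criterion (Lemma~\ref{lemma_Julia-criterion}), which places an escaping orbit with $|z_nf'(z_n)/f(z_n)|\ge\lambda>1$ either in $J(f)$ or in a multiply connected Fatou component, with Zheng's theorem (Lemma~\ref{lemma_mult-conn}) that exponential polynomials have no multiply connected Fatou components; the needed derivative lower bound follows from Lemmas~\ref{lemma_approx} and~\ref{lemma_flarge}. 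You should replace the $\cB$ step by this (or by the equivalent observation that Fatou components inside $A(f)$ are multiply connected, hence absent by Zheng).

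Secondly, the Borel--Cantelli upgrade from $|f^{n+1}(z)|\ge\exp(|f^n(z)|^\alpha)$ to $|f^{n+1}(z)|\ge\exp(c|f^n(z)|^d)$ is both shaky and unnecessary. It is shaky because, as you set it up, the ``thin sectors'' of angular width $\asymp|w|^{-1}$ around the exceptional directions are \emph{much} wider than $\Sigma_{\theta_0}$ (whose width is $\asymp|w|^{\alpha-d+1}\ll|w|^{-1}$), so $z\in X$ does not keep the orbit out of them; to run a first Borel--Cantelli you would need separate measure bounds on $f^{-n}(\text{thin sector})$, which is more distortion bookkeeping, and the summability of $|f^n(z)|^{-1}$ for a single $z$ does not by itself give a pointwise conclusion. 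It is unnecessary because of \cite[Lemma 2.1]{bergweiler} (Lemma~\ref{lemma_iterate_exp} here): for $\beta>\alpha$ one has $E_\alpha^k(x)\ge E_\beta^{k-2}(x)$ for $k\ge4$ and $x$ large, so $|f^n(z)|\ge E_\alpha^n(|z|)\ge E_\beta^{n-2}(|z|)\ge M^{n-2}(|z|,f)$ once $\beta>d$, which is already the definition of $A(f)$. Dropping the upgrade removes the secondary difficulty you flag.
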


Note that the conditions on the $b_j$ imply that  $N\ge2$. Theorem \ref{thm_meas_short} is a special case of Theorem \ref{thm_meas_detailed}. Also, the functions \eqref{eq_hemke} considered by Hemke satisfy the assumptions of Theorem \ref{thm_meas_detailed}. \\

Throughout the rest of the paper, let $f$ be an entire function satisfying the assumptions of Theorem \ref{thm_meas_detailed}. In Section \ref{sec_f}, we will show that $f$ can be approximated by simpler functions in large parts of the complex plane, and use this to prove that $|f(z)|$ is large outside a set of finite measure. Then, in Section \ref{sec_injectivity}, we show that $f$ is injective in certain small disks. We finish the proof of Theorem \ref{thm_meas_detailed} in Section \ref{sec_proof}, using a construction similar to one that occurred in McMullen's paper \cite{mcmullen}, and has since then been used by various authors. Finally, in Section \ref{sec_counterexample}, we verify the properties of Example \ref{ex_counterex}

\section{The behaviour of \texorpdfstring{$f$}{f}}   \label{sec_f}

In this section, we prove several properties of the function $f$. We first introduce some notations. For $j,\,k\in\{1,...,N\}$ with $j\ne k$ let
$$P_{j,k}(z):=(b_jz^d+P_j(z))-(b_kz^d+P_k(z))=(b_j-b_k)z^d+(P_j(z)-P_k(z)).$$
Let 
$$\nu:=d-\frac{5}{2},$$
and define the sets
$$U_1:=\left\{w\in\dC:\, |\re(w)|<|w|^{\nu/d}\right\}$$
and
$$U_2:=\left\{w\in\dC:\,|\re(w)|<2|w|^{\nu/d}\right\}.$$
Moreover, define ``exceptional sets''
$$E_l:=\bigcup_{\substack{j,k=1\\j\ne k}}^NP_{j,k}^{-1}(U_l),$$
for $l\in\{1,2\}$ (see Figure \ref{fig_E1/2}). 

\begin{figure}[ht]     
\centering 
\includegraphics[width=0.45\textwidth]{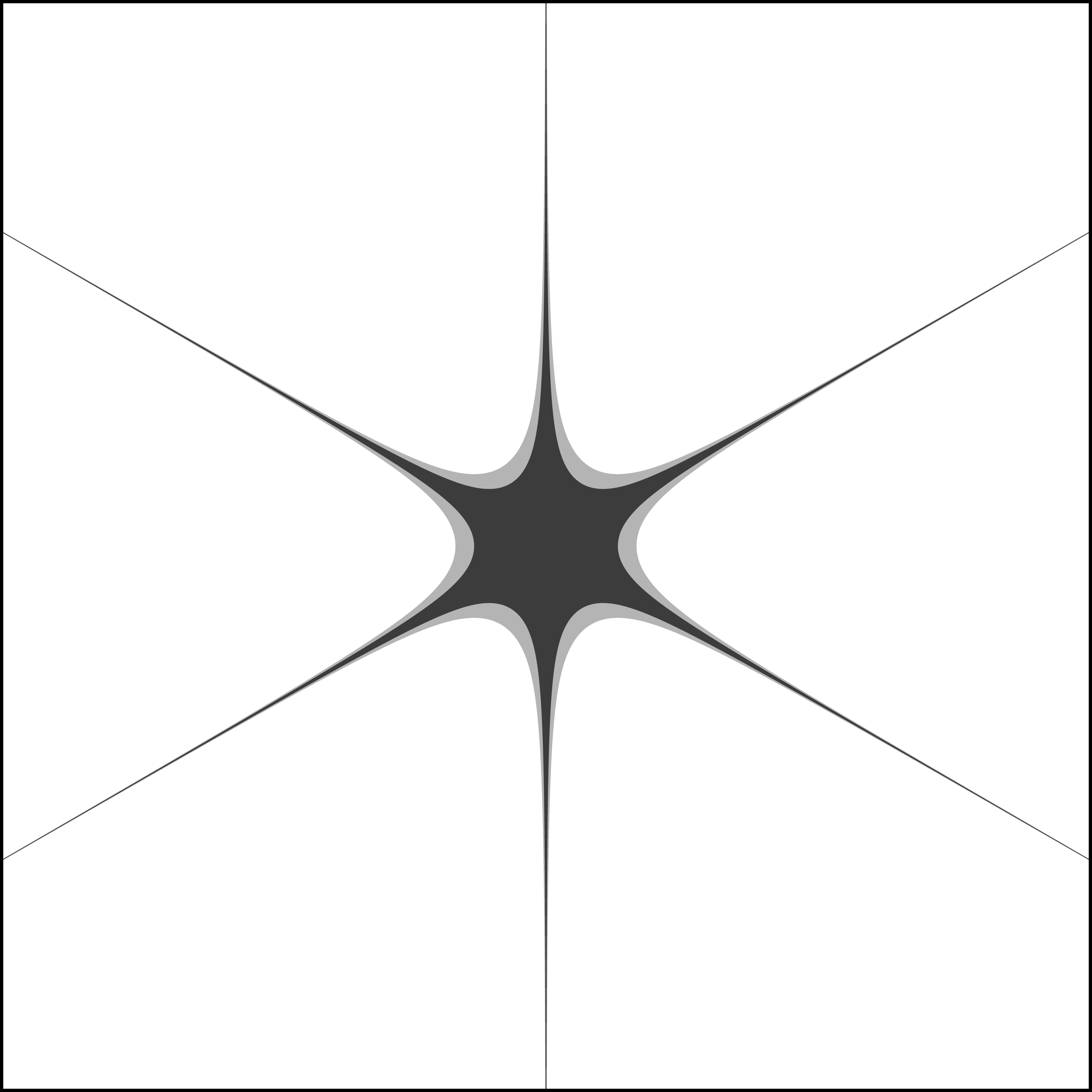}

\caption{The sets $E_1$ (dark grey) and $E_2$ (light and dark grey) for $f(z)=Q_1(z)\exp(z^3)+Q_2(z)\exp(-z^3)$.}
\label{fig_E1/2}
\end{figure}

\begin{lemma}   \label{lemma_measE2}
The Lebesgue measure of $E_1$ and $E_2$ is finite.
\end{lemma}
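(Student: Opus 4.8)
The key observation is that $E_2$ is a finite union of sets $P_{j,k}^{-1}(U_2)$, so it suffices to show each $P_{j,k}^{-1}(U_2)$ has finite measure (and the argument for $E_1$ is identical, since $U_1 \subset U_2$). Fix $j \neq k$ and write $P_{j,k}(z) = (b_j - b_k)z^d + (P_j(z) - P_j(z))$ — call the leading coefficient $c := b_j - b_k \neq 0$ and note $\deg(P_{j,k}) = d$. I want to understand the preimage under $P_{j,k}$ of the "thin cross-shaped neighborhood of the imaginary axis" $U_2 = \{|\re w| < 2|w|^{\nu/d}\}$ where $\nu/d = (d - 5/2)/d < 1$.

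**Main steps in order:**

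First, I would estimate the measure of $U_2 \cap \{|w| \le \rho\}$: this region is contained in a neighborhood of the imaginary axis of half-width $2|w|^{\nu/d} \le 2\rho^{\nu/d}$, so $\meas(U_2 \cap \{|w| \le \rho\})$ grows like $\rho^{1 + \nu/d}$, which is sub-quadratic in $\rho$. Second, I would use the change of variables formula: for large $|z|$, $|P_{j,k}'(z)| \asymp d|c| |z|^{d-1}$, so the Jacobian of $P_{j,k}$ is $|P_{j,k}'(z)|^2 \asymp |z|^{2(d-1)}$, and $|P_{j,k}(z)| \asymp |z|^d$. Writing the measure of $P_{j,k}^{-1}(U_2)$ (restricted to $|z| \ge r_0$ for suitable $r_0$, since the bounded part trivially has finite measure) as an integral over $U_2$ of $1/|P_{j,k}'|^2$ pulled back appropriately — or more cleanly, decomposing $P_{j,k}^{-1}(U_2)$ into dyadic annuli $A_n = \{2^n \le |z| < 2^{n+1}\}$ — on $A_n$ we have $|P_{j,k}(z)| \asymp 2^{nd}$, so $P_{j,k}(A_n \cap P_{j,k}^{-1}(U_2)) \subset U_2 \cap \{|w| \lesssim 2^{nd}\}$, which has measure $\lesssim 2^{nd(1 + \nu/d)} = 2^{n(d + \nu)}$. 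Since the Jacobian on $A_n$ is $\gtrsim 2^{2n(d-1)}$ (using that $P_{j,k}$ is locally a covering away from its finitely many critical points, or handling critical points as a lower-order correction), we get $\meas(A_n \cap P_{j,k}^{-1}(U_2)) \lesssim 2^{n(d+\nu)} / 2^{2n(d-1)} = 2^{n(d + \nu - 2d + 2)} = 2^{n(\nu - d + 2)} = 2^{n(-1/2)}$, using $\nu = d - 5/2$. Summing the geometric series $\sum_n 2^{-n/2} < \infty$ gives the finite total measure.

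**The main obstacle:** The delicate point is justifying the change-of-variables/Jacobian estimate rigorously near the critical points of $P_{j,k}$ (there are at most $d-1$ of them, all in a bounded disk for $|z|$ large — in fact $P_{j,k}'$ is a polynomial of degree $d-1$, so all its zeros lie in some fixed disk $\{|z| \le r_0\}$). Outside $\{|z| \le r_0\}$, $P_{j,k}$ has no critical points, but it is not injective on annuli; the clean way is to cover each annulus $A_n$ by $O(1)$ sectors on each of which $P_{j,k}$ is injective with the stated derivative bounds, apply the change of variables on each sector, and sum. One must check the implied constants are uniform in $n$, which follows from the homogeneity of the leading term. I would also need to note at the outset that $P_{j,k}^{-1}(U_2) \cap \{|z| \le r_0\}$ is bounded hence of finite measure, so only the tail matters. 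Assembling these pieces and summing over the finitely many pairs $(j,k)$ completes the proof for both $E_2$ and (a fortiori) $E_1$.
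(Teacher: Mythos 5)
Your proposal is correct and rests on the same core mechanism as the paper's proof: pull back the thin cross-shaped set $U_2$ (angular width $\asymp s^{\nu/d-1}$ at radius $s$) through $P_{j,k}$, using the Jacobian bound $|P_{j,k}'(z)|^2\asymp|z|^{2(d-1)}$ together with $|P_{j,k}(z)|\asymp|z|^d$, and then integrate or sum. The difference is in how the change of variables is organized. The paper chooses $R$ so large that all critical values of $P:=P_{j,k}$ lie in $\overline{D(0,R)}$, slits the target plane along $(-\infty,0]$, and works on the finitely many components of $P^{-1}\bigl(\dC\setminus(\overline{D(0,R)}\cup(-\infty,0])\bigr)$, each of which is mapped biholomorphically; this converts the measure of $E_2\cap V$ into a single clean integral $\int_R^\infty r^{-(1+1/(2d))}\,dr<\infty$ and sidesteps the non-injectivity issue entirely. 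You instead discretize the source plane into dyadic annuli $A_n$ and observe $\meas(A_n\cap P^{-1}(U_2))\lesssim 2^{-n/2}$, which is precisely the dyadic version of the paper's integral (the exponent $-1/2$ per dyadic step in $|z|$ matches the exponent $-1/(2d)$ per unit of $\log|w|$ after the substitution $|w|\asymp|z|^d$). Your obstacle paragraph is right to flag the multiplicity issue; the cleanest fix in your framework is not to chop into injective sectors but simply to use the multiplicity form of the change-of-variables formula, $\int_{P^{-1}(W)\cap A_n}|P'|^2\,dA=\int_W\#\{z\in A_n:P(z)=w\}\,dA(w)\le d\cdot\meas(W\cap P(A_n))$, which gives the desired bound with constant $d$ and no geometric case analysis. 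Both routes are valid; the paper's is somewhat slicker because the slit-plane decomposition makes every piece genuinely univalent, while yours is more robust and would generalize more readily to non-polynomial maps.
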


We will later prove that the function $f$ behaves ``nicely'' outside $E_1$. For $z_0\in\dC$ and $r>0$, we denote by $D(z_0,r):=\{z:\,|z-z_0|<r\}$ the open disk of radius $r$ around $z_0$. Lemma \ref{lemma_measE2} follows directly from

\begin{lemma}
Let $P$ be a polynomial of degree $d$, with $d$ as before. Then the Lebesgue measure of $P^{-1}(U_2)$ is finite.
\end{lemma}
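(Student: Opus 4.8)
The plan is to understand the set $P^{-1}(U_2)$ as a thin neighborhood of the curve where $\re(P(z))=0$, and to estimate its measure by integrating. Write $P(z)=a z^d + (\text{lower order})$ with $a\neq 0$. For $|z|=r$ large, $P$ maps the circle of radius $r$ essentially like $a z^d$ does, wrapping around $d$ times, so the preimage of the strip $U_2$ near radius $r$ consists of roughly $2d$ arcs (two for each of the $d$ "sheets", corresponding to $\re(w)$ small and positive or small and negative). On each such arc, the relevant quantity is how fast $\re(P(z))$ changes as the argument of $z$ varies: since $|P'(z)| \asymp r^{d-1}$ away from the (finitely many) zeros of $P'$, and the values $|P(z)|$ on these arcs are comparable to $r^d$, the arc has angular width of order $r^{\nu}/(r^{d-1}\cdot r) \cdot (1/r) \cdot (\text{something})$; more carefully, I would parametrize $z=re^{i\theta}$ and note that $\frac{d}{d\theta}\re(P(re^{i\theta})) = \re(iz P'(z))$, which has modulus $\asymp r^d$ outside a bounded set, so the set of $\theta$ with $|\re(P(re^{i\theta}))| < 2|P(re^{i\theta})|^{\nu/d} \asymp (r^d)^{\nu/d} = r^{\nu}$ has measure $O(r^{\nu}/r^d) = O(r^{\nu-d})$ per sheet, hence $O(r^{\nu-d})$ in total.

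Then I would bound the measure by the polar-coordinate integral
\[
\meas\bigl(P^{-1}(U_2)\bigr) \le \meas\bigl(D(0,R_0)\bigr) + \int_{R_0}^{\infty}\int_{\{\theta:\,re^{i\theta}\in P^{-1}(U_2)\}} r\,d\theta\,dr \le C_0 + \int_{R_0}^{\infty} C_1 r\cdot r^{\nu-d}\,dr,
\]
where $R_0$ is large enough that all the above estimates and the bound $|P(z)|\asymp |z|^d$ hold for $|z|\ge R_0$. Since $\nu = d-\tfrac52$, the exponent in the integrand is $1+\nu-d = -\tfrac32 < -1$, so the integral converges and $\meas(P^{-1}(U_2))$ is finite. (This is exactly where the choice $\nu = d-\tfrac52$, rather than anything closer to $d$, is used, and why $d\ge 3$ guarantees $\nu>0$ so that $U_2$ is a genuine "thin" strip rather than a half-plane.)

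The main obstacle is making the angular estimate rigorous near the zeros of $P'$ and near points where $|P(z)|$ is not comparable to $|z|^d$ — i.e. near the (finitely many) zeros of $P$ and the critical points of $P$. I would handle this by excising fixed-size disks around these finitely many points (which contribute only finite measure) and, on the complement, using that both $|P(z)|\ge c|z|^d$ and $|P'(z)|\ge c|z|^{d-1}$ hold, so that $|\re(izP'(z))| \ge c' |z|^d$ on all but a controlled portion of each circle. An alternative, cleaner route that avoids the circle-by-circle bookkeeping: factor $P(z)=a\prod_{m=1}^d(z-z_m)$, observe that on $P^{-1}(U_2)$ one has $|\re(P(z))|$ small compared to $|P(z)|$, hence $\arg P(z)$ lies near $\pm\pi/2$; since $\arg P(z)=\arg a + \sum_m \arg(z-z_m)$ and each summand is a harmonic-type function, one can use a change of variables or a direct covering argument. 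I expect the first (direct integration in polar coordinates, with small disks around bad points removed) to be the most transparent, and I would present it that way.
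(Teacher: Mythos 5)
Your approach is correct in outline, but it is a genuinely different route from the paper's. You estimate the angular measure of $\{\theta:\,re^{i\theta}\in P^{-1}(U_2)\}$ on each circle $|z|=r$ directly in the source plane and then integrate $\int r\cdot O(r^{\nu-d})\,dr$; the exponent $1+\nu-d=-3/2$ makes this converge. The paper instead changes variables to the \emph{image} plane: it picks $R$ large enough to contain all critical values, lets $\varphi$ be a local inverse of $P$ on a component $V$ of $P^{-1}\bigl(\dC\setminus(\overline{D(0,R)}\cup(-\infty,0])\bigr)$, writes $\meas(E_2\cap V)=\int_{U_2\setminus\overline{D(0,R)}}|\varphi'|^2\,d(x,y)=\int\frac{1}{|P'\circ\varphi|^2}$, and then integrates in polar coordinates in the $w$-plane using the explicit angular width $\arcsin(2r^{-5/(2d)})$ of $U_2$ and the lower bound $|P'(\varphi(re^{i\theta}))|\gtrsim r^{(d-1)/d}$. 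Both give convergent integrals; the paper's conformal change of variables sidesteps precisely the circle-by-circle bookkeeping you flag as the main obstacle, since it automatically accounts for distortion and for the (finitely many) sheets without having to count components of sublevel sets.

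One point in your sketch worth tightening: the statement that $\frac{d}{d\theta}\re(P(re^{i\theta}))=\re(izP'(z))$ ``has modulus $\asymp r^d$ outside a bounded set'' is not literally true — this angular derivative vanishes wherever $\re(P(re^{i\theta}))$ has a local extremum on the circle, and that happens at roughly $2d$ points on every circle. What is true, and what you actually need, is that $|\re(izP'(z))|\gtrsim r^d$ \emph{on the set} $P^{-1}(U_2)\cap\{|z|=r\}$ for $r$ large: there $P(z)$ is nearly purely imaginary, and since $zP'(z)/P(z)\to d$ one has $izP'(z)\approx idP(z)$, which is nearly real with modulus $\asymp r^d$, hence $|\re(izP'(z))|\gtrsim r^d$. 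Combined with the observation (which you do mention) that $\theta\mapsto\re(P(re^{i\theta}))$ is a trigonometric polynomial of degree $d$ and so $\{|\re(P(re^{i\theta}))|<r^{\nu}\}$ has at most $O(d)$ components, this yields the angular bound $O(r^{\nu-d})$ and the proof goes through. So the idea is sound; the sentence as written would need this correction to be rigorous.
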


\begin{proof}
Write
$$P(z)=\sum_{j=0}^da_jz^j.$$
Fix $R>2^{2d/5}$ such that all critical values of $P$ are contained in $D(0,R)$, and let $V$ be a component of $P^{-1}\left(\dC\setminus\left(\overline{D(0,R)}\cup(-\infty,0]\right)\right)$. Then the restriction $P|_V:V\to\dC\setminus\left(\overline{D(0,R)}\cup(-\infty,0]\right)$ is biholomorphic. Let $\varphi$ denote the corresponding inverse function. Then $E_2\cap V=\varphi\left(U_2\setminus\overline{D(0,R)}\right)$. Let 
$$W:=U_2\setminus \overline{D(0,R)}=\left\{re^{i\theta}: \,r>R, \,\min\left\{\left|\theta-\frac{\pi}{2}\right|,\,\left|\theta-\frac{3\pi}{2}\right|\right\}<\arcsin(2r^{-5/(2d)})\right\}.$$
We have, with $\eta(r)=\arcsin(2r^{-5/(2d)})$,
\begin{align*}
&\meas(E_2\cap V)\,=\,\int_{W}|\varphi'(z)|^2d(x,y)\,=\,\int_{W}\dfrac{1}{|P'(\varphi(z))|^2}d(x,y)\\
&\qquad=\int_{R}^\infty r\left(\int_{\frac{\pi}{2}-\eta(r)}^{\frac{\pi}{2}+\eta(r)}\dfrac{1}{|P'(\varphi(re^{i\theta}))|^2}\,d\theta+\int_{\frac{3\pi}{2}-\eta(r)}^{\frac{3\pi}{2}+\eta(r)}\dfrac{1}{|P'(\varphi(re^{i\theta}))|^2}\,d\theta\right)\,dr.
\end{align*}
If $r$ is sufficiently large, then 
$$r=|P(\varphi(re^{i\theta}))|\le2^{d/(d-1)}|a_d||\varphi(re^{i\theta})|^d.$$
Thus,
$$|\varphi(re^{i\theta})|\ge2^{-1/(d-1)}|a_d|^{-1/d}\cdot r^{1/d}$$
and
\begin{equation}  \label{eq_P'}
|P'(\varphi(re^{i\theta}))|\ge\frac{1}{2}d|a_d||\varphi(re^{i\theta})|^{d-1}\ge\frac{1}{4}d|a_d|^{1/d}\cdot r^{(d-1)/d}.
\end{equation}
Using \eqref{eq_P'} and $$\eta(r)=\arcsin(2r^{-5/(2d)})\le\pi r^{-5/(2d)},$$
we get
\begin{align*}
 \meas(E_2\cap V)&\le\int_{R}^\infty r\cdot4\eta(r)\cdot \dfrac{16}{d^2|a_d|^{2/d}r^{2(d-1)/d}}\,dr\\
&\le\dfrac{64\pi}{d^2}|a_d|^{-2/d}\int_{R}^\infty r\cdot r^{-5/(2d)}\cdot r^{-2(d-1)/d}\,dr\\
&=\dfrac{64\pi}{d^2}|a_d|^{-2/d}\int_{R}^\infty r^{-(1+1/(2d))}\,dr\,<\infty.
\end{align*}
Since there are only finitely many such components $V$, and also $P^{-1}(\overline{D(0,R)})$ has finite Lebesgue measure, the claim follows.
\end{proof}

The next Lemma yields that if $R_0>0$ is large, then in each component of $\dC\setminus (E_1\cup D(0,R_0))$, $f$ behaves like one of its summands $Q_m(z)\exp(b_mz^d+P_m(z))$.
 
 \begin{lemma}                \label{lemma_approx}
 Let $\varepsilon>0$. If $R_0>0$ is sufficiently large, then for each connected component $V$ of $\dC\setminus (E_1\cup D(0,R_0))$ there is an $m\in\{1,...,N\}$, such that, for all $z\in V$ and all $j\in\{1,...,N\}$ with $j\ne m$, we have
$$\re(b_mz^d+P_m(z))>\re(b_jz^d+P_j(z)),$$
and
$$\left|\frac{f(z)}{Q_m(z)\exp(b_mz^d+P_m(z))}-1\right|<\varepsilon,$$
$$\left|\frac{f'(z)}{db_mz^{d-1}Q_m(z)\exp(b_mz^d+P_m(z))}-1\right|<\varepsilon,$$
$$\left|\frac{f''(z)}{d^2b_m^2z^{2d-2}Q_m(z)\exp(b_mz^d+P_m(z))}-1\right|<\varepsilon.$$
\end{lemma}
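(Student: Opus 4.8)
The plan is to use that, away from $E_1$, the real parts of the exponents $b_jz^d+P_j(z)$ are separated by a positive power of $|z|$, so that exactly one summand of $f$ dominates all the others. \emph{Step 1 (separation of exponents).} Since the $b_j$ are distinct and $\deg(P_j-P_k)<d$, each polynomial $P_{j,k}$ has exact degree $d$; hence there are $R_1>0$ and $c>0$, depending only on $f$, with $|P_{j,k}(z)|\ge\tfrac12|b_j-b_k||z|^d$ for all $j\ne k$ and all $|z|\ge R_1$. If in addition $z\notin E_1$, then $P_{j,k}(z)\notin U_1$, so with $\nu=d-\tfrac52\ge\tfrac12$ and after shrinking $c$ we obtain
$$\bigl|\re(P_{j,k}(z))\bigr|\ \ge\ |P_{j,k}(z)|^{\nu/d}\ \ge\ c\,|z|^{\nu};$$
in particular $\re(P_{j,k}(z))\ne0$.

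\emph{Step 2 (the dominating index).} Fix $R_0\ge R_1$ and let $V$ be a component of $\dC\setminus(E_1\cup D(0,R_0))$. For each pair $j\ne k$ the function $z\mapsto\re(P_{j,k}(z))=\re(b_jz^d+P_j(z))-\re(b_kz^d+P_k(z))$ is continuous on $V$ and, by Step 1, nowhere zero, hence of constant sign on the connected set $V$. Comparing the real parts of the $N$ exponents at a single point of $V$ therefore yields a strict total order of $\{1,\dots,N\}$ that does not depend on the point; let $m=m(V)$ be its largest element. Then $\re(b_mz^d+P_m(z))>\re(b_jz^d+P_j(z))$ for all $z\in V$ and all $j\ne m$ — this is the first assertion — and moreover $\re(P_{j,m}(z))\le-c|z|^\nu$ on $V$ for every $j\ne m$.

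\emph{Step 3 (the three estimates).} Dividing through by $Q_m(z)\exp(b_mz^d+P_m(z))$ gives
$$\frac{f(z)}{Q_m(z)\exp(b_mz^d+P_m(z))}=1+\sum_{j\ne m}\frac{Q_j(z)}{Q_m(z)}\exp\bigl(P_{j,m}(z)\bigr).$$
Since $Q_m\not\equiv0$, there are $C,K>0$ with $|Q_j(z)/Q_m(z)|\le C|z|^K$ for $|z|$ large, while $|\exp(P_{j,m}(z))|=\exp(\re(P_{j,m}(z)))\le\exp(-c|z|^\nu)$ on $V$; hence each term of the sum is at most $C|z|^K\exp(-c|z|^\nu)$, which tends to $0$ as $|z|\to\infty$ because $\nu>0$. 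Choosing $R_0$ so large that $NCr^K\exp(-cr^\nu)<\varepsilon$ for all $r\ge R_0$ yields the first estimate, uniformly over all components $V$. For $f'$ and $f''$ one differentiates termwise: the bracket multiplying $\exp(b_jz^d+P_j(z))$ in $f'$ equals $Q_j'(z)+Q_j(z)(db_jz^{d-1}+P_j'(z))=db_jz^{d-1}Q_j(z)+O\bigl(|z|^{\deg Q_j+d-2}\bigr)$, and the corresponding bracket in $f''$ equals $d^2b_j^2z^{2d-2}Q_j(z)$ plus terms of strictly smaller degree in $z$ (here one uses $\deg P_j'\le d-2$). Dividing by $db_mz^{d-1}Q_m(z)\exp(b_mz^d+P_m(z))$, respectively by $d^2b_m^2z^{2d-2}Q_m(z)\exp(b_mz^d+P_m(z))$, the term with $j=m$ tends to $1$ and the remaining terms are again bounded by $C'|z|^{K'}\exp(-c|z|^\nu)\to0$; a final enlargement of $R_0$ gives the last two estimates.

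\emph{Main obstacle.} The substance is entirely in Steps 1 and 2: turning the definition of $U_1$ into the quantitative separation $|\re(P_{j,k}(z))|\ge c|z|^\nu$ and observing that the dominating index is locally constant. Step 3 is then routine, as a polynomial in $|z|$ is swamped by $\exp(-c|z|^\nu)$. The only point needing a little care is that $R_0$ can be chosen the same for all components $V$; this is automatic, since every bound above depends on $z$ only through $|z|$ and involves only the finitely many indices $j$, $k$, $m$.
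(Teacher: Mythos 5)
Your proposal is correct and follows essentially the same route as the paper's own proof: use the definition of $E_1$ to obtain the quantitative lower bound $|\re(P_{j,k}(z))|\ge|P_{j,k}(z)|^{\nu/d}$, invoke continuity (and the nonvanishing of $\re(P_{j,k})$ on $V$) to show that the dominating index $m$ is constant on each component, and then bound the non-dominant terms of $f$, $f'$, $f''$ by a polynomial in $|z|$ times $\exp(-c|z|^\nu)$. Your Steps 1 and 2 merely make explicit what the paper compresses into the sentence beginning ``By continuity, $m$ depends only on the connected component,'' and the derivative estimates are handled exactly as in the paper.
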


\begin{proof}
Let $R_0>0$ such that all zeros of the polynomials $P_{j,k}$  are contained in $D(0,R_0)$. Let $z\in\dC\setminus E_1$ with $|z|\ge R_0$. Then, for all $j,k\in\{1,...,N\}$ with $j\ne k$, 
$$|\re(b_jz^d+P_j(z))-\re(b_kz^d+P_k(z))|=|\re(P_{j,k}(z))|\ge|P_{j,k}(z)|^{\nu/d}.$$
Thus, there exists $m\in\{1,...,N\}$ such that, for all $j\in\{1,...,N\}$ with $j\ne m$,
$$\re(b_mz^d+P_m(z))\ge\re(b_jz^d+P_j(z))+|P_{j,m}(z)|^{\nu/d}>\re(b_jz^d+P_j(z)).$$
By continuity, $m$ depends only on the connected component of $\dC\setminus (E_1\cup D(0,R_0))$ containing $z$, and not on $z$ itself.
We get
\begin{align*}
&\left|\dfrac{f(z)}{Q_m(z)\exp\left(b_mz^d+P_m(z)\right)}-1\right|\\
&\qquad=\left|\dfrac{\sum_{j=1}^NQ_j(z)\exp\left(b_jz^d+P_j(z)\right)-Q_m(z)\exp\left(b_mz^d+P_m(z)\right)}{Q_{m}(z)\exp\left(b_mz^d+P_m(z)\right)}\right|\\
&\qquad=\Bigg|\sum_{\substack{j=1\\j\ne m}}^N\dfrac{Q_j(z)}{Q_m(z)}\exp\left(b_jz^d+P_j(z)-(b_mz^d+P_m(z))\right)\Bigg|\\
&\qquad\le\sum_{\substack{j=1\\j\ne m}}^N\left|\dfrac{Q_j(z)}{Q_m(z)}\right|\exp\left(\re(b_jz^d+P_j(z))-\re(b_mz^d+P_m(z))\right)\\
&\qquad\le\sum_{\substack{j=1\\j\ne m}}^N\left|\dfrac{Q_j(z)}{Q_m(z)}\right|\exp\left(-|P_{j,m}(z)|^{\nu/d}\right)<\varepsilon,
\end{align*}
if $R_0$ and hence $|z|$ is sufficiently large. This is the result for $f$.

Moreover, 
$$f'(z)=\sum_{j=1}^N(db_jz^{d-1}Q_j(z)+P_j'(z)Q_j(z)+Q_j'(z))\exp(b_jz^d+P_j(z).$$
The result for $f'$ now follows from similar estimates as above and the fact that 
$$db_jz^{d-1}Q_j(z)+P_j'(z)Q_j(z)+Q_j'(z)=(1+o(1))db_jz^{d-1}Q_j(z)\quad\text{as }|z|\to\infty.$$
Analogously, the result for $f''$  follows from
$$f''(z)=\sum_{j=1}^N(1+o(1))d^2b_j^2z^{2d-2}Q_j(z)\exp(b_jz^d+P_j(z))\quad\text{as }|z|\to\infty.$$
\end{proof}

\begin{remark}
In order to prove Lemma \ref{lemma_approx}, we did not need any assumptions on the arguments of the $b_j$. In particular, the statement remains true without the additional condition \eqref{eq_extra-condition} in the case that $\arg(b_{j+1})=\arg(b_j)+\pi$ for some $j\in\{1,...,N-1\}$ or $\arg(b_N)=\arg(b_1)+\pi$.
\end{remark}

This is different for the next result.

\begin{lemma}    \label{lemma_flarge}
Let $\alpha\in(0,\nu)$. If $z\in\dC\setminus E_1$ and $|z|$ is sufficiently large, then
$$|f(z)|\ge \exp(|z|^\alpha)\quad\text{and}\quad|f'(z)|\ge \exp(|z|^\alpha).$$
\end{lemma}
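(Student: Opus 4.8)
The plan is to combine Lemma~\ref{lemma_approx} with a lower bound for the real part of the dominating exponent. Applying Lemma~\ref{lemma_approx} with $\varepsilon=\tfrac12$: for $z\in\dC\setminus E_1$ with $|z|$ sufficiently large there is an index $m$ with $\re(b_mz^d+P_m(z))\ge\re(b_jz^d+P_j(z))$ for all $j$, and
\[
|f(z)|\ge\tfrac12|Q_m(z)|\exp\bigl(\re(b_mz^d+P_m(z))\bigr),\qquad |f'(z)|\ge\tfrac12 d|b_m|\,|z|^{d-1}|Q_m(z)|\exp\bigl(\re(b_mz^d+P_m(z))\bigr).
\]
Since each $Q_j\not\equiv0$, for $|z|$ large all $|Q_j(z)|$ are bounded below by a positive constant, and the $|b_j|$ are positive constants, so it suffices to show that $\re(b_mz^d+P_m(z))\ge c_0|z|^\nu$ for some $c_0>0$ and all such $z$; since $\alpha<\nu$ this then yields $|f(z)|,|f'(z)|\ge\exp(|z|^\alpha)$ once $|z|$ is large enough.

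To bound $\re(b_mz^d+P_m(z))$ I distinguish two cases according to the gaps between the numbers $\arg(b_j)$. Suppose first that $\arg(b_{j+1})<\arg(b_j)+\pi$ for all $j\in\{1,\dots,N-1\}$ and $\arg(b_N)>\arg(b_1)+\pi$. Then the smallest closed arc of $\dR/2\pi\dZ$ containing all $\arg(b_j)$ has length strictly greater than $\pi$, and a compactness argument on the unit circle shows $\max_{1\le j\le N}\re(b_jw)\ge c_1|w|$ for some $c_1>0$ and all $w\in\dC$ — otherwise $\re(b_jw_0)\le0$ for all $j$ at some $w_0$ with $|w_0|=1$, which would force all $\arg(b_j)$ into an arc of length $\pi$. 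Taking $w=z^d$ and using $\deg P_j<d$ gives $\re(b_mz^d+P_m(z))\ge\max_j\re(b_jz^d)-O(|z|^{d-1})\ge\tfrac12 c_1|z|^d$ for $|z|$ large, which exceeds $|z|^\nu$ since $d>\nu$.

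In the remaining case some gap equals $\pi$, and then hypothesis~\eqref{eq_extra-condition} supplies $k,l$ with $\arg(b_l)=\arg(b_k)+\pi$ — say $b_l=-\lambda b_k$ with $\lambda>0$ — and $P_k=b_kg+g_k$, $P_l=b_lg+g_l$, where $\deg g\le d-1$ and $\deg g_k,\deg g_l\le d-3$. Setting $A:=\re\bigl(b_k(z^d+g(z))\bigr)$ we have $\re(b_kz^d+P_k(z))=A+\re g_k(z)$ and $\re(b_lz^d+P_l(z))=-\lambda A+\re g_l(z)$, so
\[
\re(b_mz^d+P_m(z))\ \ge\ \max\bigl\{\re(b_kz^d+P_k(z)),\,\re(b_lz^d+P_l(z))\bigr\}\ \ge\ \min\{1,\lambda\}\,|A|-O(|z|^{d-3}).
\]
On the other hand $P_{k,l}(z)=(b_k-b_l)(z^d+g(z))+(g_k(z)-g_l(z))=(1+\lambda)b_k(z^d+g(z))+(g_k(z)-g_l(z))$, and since $z\notin E_1\supseteq P_{k,l}^{-1}(U_1)$ we have $|\re P_{k,l}(z)|\ge|P_{k,l}(z)|^{\nu/d}\ge c_2|z|^\nu$ for $|z|$ large; hence $(1+\lambda)|A|\ge|\re P_{k,l}(z)|-O(|z|^{d-3})\ge\tfrac12 c_2|z|^\nu$, using $\nu=d-\tfrac52>d-3$. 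Substituting back gives $\re(b_mz^d+P_m(z))\ge c_0|z|^\nu$, as required.

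The one genuine obstacle is this last case: the arc argument fails exactly when a gap equals $\pi$, and by Example~\ref{ex_counterex} the conclusion really can fail without~\eqref{eq_extra-condition}. Its role is precisely to force two of the exponents to be negatives of each other up to an error of order $|z|^{d-3}$, so that the larger of their real parts is comparable to $|\re P_{k,l}(z)|$, which is large because $z\notin E_1$. The rest — bookkeeping with the exponents $\nu$, $d-1$, $d-3$ and checking $\nu>d-3$, which is where the assumption $d\ge3$ enters — is routine.
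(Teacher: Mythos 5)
Your proof is correct and follows essentially the same route as the paper: apply Lemma~\ref{lemma_approx} with $\varepsilon=\tfrac12$ to reduce the problem to a lower bound on $\re(b_mz^d+P_m(z))$, handle the strict-gap case via a compactness/direction argument giving $\max_j\re(b_jz^d)\gtrsim|z|^d$, and handle a $\pi$-gap via condition~\eqref{eq_extra-condition} by comparing the two opposed exponents through $\re P_{k,l}(z)$, which is large because $z\notin E_1$. The paper's phrasing of the $\pi$-gap step picks the index $j$ with $\re(b_j(z^d+g(z)))\ge0$ WLOG, whereas you keep the symmetry by working with $|A|$ and $\min\{1,\lambda\}$; these are the same computation in different notation, and both hinge on $\nu=d-\tfrac52>d-3$.
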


\begin{remark}
Without the additional condition \eqref{eq_extra-condition} in the case that $\arg(b_{j+1})=\arg(b_j)+\pi$ for some $j\in\{1,...,N\}$ or $\arg(b_N)=\arg(b_1)+\pi$, the statement of Lemma \ref{lemma_flarge} is not true in general. We will prove in Section \ref{sec_counterexample} that the function $h(z)=\frac{1}{2}\exp(z^3+iz)-\frac{1}{2}\exp(-z^3+iz)$ given in Example \ref{ex_counterex} is bounded in a set of infinite Lebesgue measure.
\end{remark}

\begin{proof} [Proof of Lemma \ref{lemma_flarge}]
We prove the statement for $f$. The proof for $f'$ is analogous. Let $z\in\dC\setminus E_1$, and let $m\in\{1,...,N\}$ with 
$$\re(b_mz^d+P_m(z))=\max_{1\le j\le N}\re(b_jz^d+P_j(z)).$$ By Lemma \ref{lemma_approx},
$$|f(z)|\ge\frac{1}{2}|Q_m(z)|\exp\left(\re(b_mz^d+P_m(z))\right),$$
if $|z|$ is sufficiently large. Thus, it suffices to show that there exists $j\in\{1,...,N\}$ with
$$\re(b_jz^d+P_j(z))\ge 2|z|^\alpha.$$
We first consider the case that $f$ satisfies the assumptions of Theorem \ref{thm_meas_short}, that is, 
$$\arg(b_{j+1})<\arg(b_j)+\pi \text{ for all }j\in\{1,...,N-1\}\text{ and}\, \arg(b_N)>\arg(b_1)+\pi.$$
Then there is a constant $C>0$, such that for all $z\in\dC$ there exists $j\in\{1,...,N\}$ with
$\re(b_jz^d)\ge 2C|z|^d$. Since $\deg(P_j)<d$, we get
$$\re(b_jz^d+P_j(z))\ge 2C|z|^d-|P_j(z)|\ge C|z|^d>2|z|^\alpha,$$
if $|z|$ is sufficiently large. 

Now suppose that $f$ does not satisfy the assumptions of Theorem \ref{thm_meas_short}. Then the assumptions of Theorem \ref{thm_meas_detailed} imply that there are $j,k\in\{1,...,N\}$ satisfying $|\arg(b_j)-\arg(b_k)|=\pi$, and polynomials $g,g_j, g_k$ with $\deg(g)\le d-1$ and $\max\{\deg(g_j),\,\deg(g_k)\}\le d-3$, such that 
$$P_j=b_jg+g_j\quad\text{and}\quad P_k=b_kg+g_k.$$
 With $\beta_j:=\arg(b_j)$ we have
$$b_j=|b_j|e^{i\beta_j}\quad\text{and}\quad b_k=-|b_k|e^{i\beta_j}.$$
Thus,
$$b_j-b_k=(|b_j|+|b_k|)e^{i\beta_j}$$
and
$$P_{j,k}(z)=(|b_j|+|b_k|)e^{i\beta_j}(z^d+g(z))+(g_j(z)-g_k(z)).$$
Moreover, we assume without loss of generality that 
$$\re\left(b_j(z^d+g(z))\right)\ge\re\left(b_k(z^d+g(z))\right).$$ Then $\re(b_j(z^d+g(z)))\ge0$.
Since $z\notin E_1$, we get
$$|P_{j,k}(z)|^{\nu/d}\le|\re(P_{j,k}(z))|\le(|b_j|+|b_k|)\re\left(e^{i\beta_j}(z^d+g(z))\right)+|g_j(z)-g_k(z)|.$$
Hence,
\begin{align*}
\re(b_jz^d+P_j(z))&=\re\left(b_j(z^d+g(z))+g_j(z)\right)\\
&=|b_j|\re\left(e^{i\beta_j}(z^d+g(z))\right)+\re\left(g_j(z)\right)\\
&\ge\frac{|b_j|}{|b_j|+|b_k|}\left(|P_{k,j}(z)|^{\nu/d}-|g_j(z)-g_k(z)|\right)-|g_j(z)|.
\end{align*}
Since $|P_{j,k}(z)|\ge2^{-d/\nu}(|b_j|+|b_k|)|z|^d$ if $|z|$ is large, and since $\max\{\deg(g_j),\,\deg(g_k)\}<\nu$, we deduce that
$$\re(b_jz^d+P_j(z))\ge\frac{1}{4}|b_j|(|b_j|+|b_k|)^{(\nu/d)-1}|z|^\nu>2|z|^\alpha,$$
if $|z|$ is sufficiently large. This completes the proof.
\end{proof}

\section{Injectivity}   \label{sec_injectivity}

The aim of this section is to prove that $f$ is injective in certain disks contained in $\dC\setminus E_1$. We start with a basic injectivity criterion (see, e.g., \cite[Proposition 1.10]{pommerenke2}). 

\begin{lemma}  \label{lemma_injectivity_criterion_basic}
Let $D\subset\dC$ be a convex domain, and let $h:D\to\dC$ be holomorphic. If $\re(h'(z))>0$ for all $z\in D$, then $h$ is injective in $D$.
\end{lemma}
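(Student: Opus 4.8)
The plan is to reduce injectivity to a statement about a line integral along a segment, exploiting convexity. Take two distinct points $z_1, z_2 \in D$. By convexity the segment $\gamma(t) := z_1 + t(z_2 - z_1)$, $t \in [0,1]$, lies entirely in $D$, so $h$ is holomorphic on a neighbourhood of this segment and the fundamental theorem of calculus applies along it. First I would write
$$
h(z_2) - h(z_1) = \int_0^1 \frac{d}{dt}\, h(\gamma(t))\, dt = (z_2 - z_1)\int_0^1 h'(\gamma(t))\, dt,
$$
using $\gamma'(t) = z_2 - z_1$.

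Next I would argue that the integral $I := \int_0^1 h'(\gamma(t))\, dt$ is nonzero. Since the real part commutes with integration over a real parameter,
$$
\re(I) = \int_0^1 \re\!\big(h'(\gamma(t))\big)\, dt > 0,
$$
because the integrand is strictly positive on $[0,1]$ by hypothesis (and continuous, hence the integral of a positive continuous function is positive). In particular $I \neq 0$. Combined with $z_2 - z_1 \neq 0$, this gives $h(z_2) - h(z_1) \neq 0$, i.e.\ $h(z_1) \neq h(z_2)$. As $z_1, z_2$ were arbitrary distinct points of $D$, the map $h$ is injective.

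There is essentially no serious obstacle here; the only points that need care are the two places where convexity and the hypothesis are actually used: convexity guarantees the connecting segment stays in $D$ so that the integral representation is valid, and the sign hypothesis on $\re(h')$ guarantees that a complex number with strictly positive real part cannot be zero. If one wanted to be maximally careful one could also remark that $t \mapsto h'(\gamma(t))$ is continuous on the compact interval $[0,1]$, so all the integrals above are well defined. This is exactly the standard argument (cf.\ \cite[Proposition 1.10]{pommerenke2}), and I would present it in just the few lines above.
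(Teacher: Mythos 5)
Your proof is correct, and it is exactly the classical Noshiro--Warschawski argument that the cited reference \cite[Proposition 1.10]{pommerenke2} gives; the paper itself offers no proof, only the citation, so there is nothing to diverge from.
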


We also require the following criterion.

\begin{lemma}      \label{lemma_injectivity_criterion}
Let $z_0\in\dC$ and $r>0$. Let $h$ be holomorphic in $D(z_0,r)$. Suppose that $h'(\zeta)\ne0$ for all $\zeta\in D(z_0,r)$ and
$$\sup_{|\zeta-z_0|<r}\left|\dfrac{h''(\zeta)}{h'(\zeta)}\right|<\frac{1}{r}.$$
Then $h$ is injective in $D(z_0,r)$.
\end{lemma}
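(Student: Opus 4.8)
The plan is to prove directly that $h(z_1)\neq h(z_2)$ whenever $z_1,z_2\in D(z_0,r)$ are distinct. Since $D(z_0,r)$ is convex, the segment $[z_1,z_2]$ lies in $D(z_0,r)$, and
$$h(z_2)-h(z_1)=(z_2-z_1)\int_0^1 h'\bigl(z_1+t(z_2-z_1)\bigr)\,dt,$$
so it suffices to show this integral is nonzero. I would do this by producing a constant $c\in\dC\setminus\{0\}$ with $\re\bigl(h'(\zeta)/c\bigr)>0$ for every $\zeta$ on $[z_1,z_2]$; then dividing the integral by $c$ yields something with strictly positive real part, hence nonzero, and since $z_1\neq z_2$ we conclude $h(z_1)\neq h(z_2)$.

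The right base point is the midpoint $m:=\tfrac12(z_1+z_2)$, and I take $c:=h'(m)$. The key observation is that every $\zeta\in[z_1,z_2]$ satisfies $|\zeta-m|\le\tfrac12|z_1-z_2|<r$, because $z_1,z_2\in D(z_0,r)$ forces $|z_1-z_2|<2r$. Since $h'$ is holomorphic and zero-free on the simply connected domain $D(z_0,r)$, there is a holomorphic branch $\phi$ of $\log h'$ on $D(z_0,r)$ with $\phi'=h''/h'$, so by hypothesis $|\phi'|<1/r$ throughout. Integrating $\phi'$ along the segment from $m$ to $\zeta$ (contained in $D(z_0,r)$ by convexity) gives
$$|\phi(\zeta)-\phi(m)|\le\int_{[m,\zeta]}|\phi'(w)|\,|dw|<\frac1r\,|\zeta-m|<1<\frac{\pi}{2}.$$
In particular $|\im(\phi(\zeta)-\phi(m))|<\pi/2$, so
$$\re\!\left(\frac{h'(\zeta)}{h'(m)}\right)=\re\bigl(e^{\phi(\zeta)-\phi(m)}\bigr)=e^{\re(\phi(\zeta)-\phi(m))}\cos\bigl(\im(\phi(\zeta)-\phi(m))\bigr)>0,$$
and the reduction of the first paragraph finishes the argument.

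The one delicate point is getting the bound $|\phi(\zeta)-\phi(m)|<\pi/2$ rather than something merely finite: this is exactly why the midpoint, and not an endpoint, is used as the base point for the logarithm estimate. With an endpoint $z_1$ one would only have $|\zeta-z_1|<2r$, giving the useless estimate $|\phi(\zeta)-\phi(z_1)|<2$. For the same reason the lemma cannot simply be reduced to Lemma \ref{lemma_injectivity_criterion_basic}: there need not exist a single constant $c$ with $\re(h'/c)>0$ on all of $D(z_0,r)$ (distances to a fixed base point inside the disk can be as large as $2r$), so the argument must be localized to each segment separately. Everything else — the existence of the branch of $\log h'$, the convexity of the disk, and interchanging $\re$ with the integral — is routine.
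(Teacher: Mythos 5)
Your proof is correct, but the closing remark---that one cannot reduce to Lemma \ref{lemma_injectivity_criterion_basic} because ``distances to a fixed base point inside the disk can be as large as $2r$''---is mistaken, and the paper's proof is in fact exactly such a reduction. The fixed base point to use is the \emph{centre} $z_0$ of the disk, which is given to you; every $z\in D(z_0,r)$ satisfies $|z-z_0|<r$, not $2r$. The paper normalizes $h'(z_0)=1$, takes the branch $\psi$ of $\log h'$ with $\psi(z_0)=0$, and integrates the hypothesis from $z_0$ to $z$ to get $|\psi(z)|<\tfrac1r|z-z_0|<1<\pi/2$, hence $\re(h'(z))>0$ on all of $D(z_0,r)$; Lemma \ref{lemma_injectivity_criterion_basic} then finishes immediately. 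Your version instead re-derives the same estimate locally on each segment $[z_1,z_2]$ with the midpoint as base point and the integral-mean-value representation in place of Lemma \ref{lemma_injectivity_criterion_basic}; this works and is self-contained, but the ``midpoint trick'' is unnecessary here, and the heuristic you offer for why it is needed is false. (Your midpoint argument would be the right move if the hypothesis bounded $|h''/h'|$ only by $\pi/(2r)$ on a general convex domain of diameter $2r$ without a distinguished centre---but that is not the situation of the lemma.)
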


This follows directly from Becker's univalence criterion (see, e.g., \cite[Theorem 6.7]{pommerenke}). However, Lemma \ref{lemma_injectivity_criterion} may also be proved by much more elementary arguments using Lemma \ref{lemma_injectivity_criterion_basic}. We sketch the proof here.

\begin{proof}[Sketch of proof]
We may assume without loss of generality that $h'(z_0)=1$. Let $\psi$ be the branch of $\log h'$ in $D(z_0,r)$ satisfying $\psi(z_0)=0$. Then, for all $z\in D(z_0,r)$,
$$|\psi(z)|=\left|\int_{z_0}^z\frac{h''(\zeta)}{h'(\zeta)}\,d\zeta\right|<\frac{1}{r}|z-z_0|<1.$$
Thus, $\arg(h'(z))=\im(\psi(z))\in(-1,1)$. In particular, $\re(h'(z))>0$. Hence, $h$ is injective in $D(z_0,r)$ by Lemma \ref{lemma_injectivity_criterion_basic}.
\end{proof}

 We now state the main result of this section.

\begin{lemma}   \label{lemma_injectivity}
Let  $\sigma\in\left(0,\dfrac{1}{4d\max_j|b_j|}\right)$. Let $z\in\dC\setminus E_1$ such that $D(z,2\sigma|z|^{-(d-1)})\subset\dC\setminus E_1$. If $|z|$ is sufficiently large, then $f$ is injective in $D(z,2\sigma|z|^{-(d-1)})$. 
\end{lemma}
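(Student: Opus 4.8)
The plan is to apply Lemma \ref{lemma_injectivity_criterion} with $h=f$, $z_0=z$ and $r=2\sigma|z|^{-(d-1)}$. For this we need two things: first, that $f'$ does not vanish in the disk $D(z,2\sigma|z|^{-(d-1)})$, and second, that $|f''(\zeta)/f'(\zeta)|<1/r=\tfrac{1}{2\sigma}|z|^{d-1}$ for all $\zeta$ in this disk. Both will follow from the approximation provided by Lemma \ref{lemma_approx}, once we observe that the whole disk lies in a single connected component $V$ of $\dC\setminus(E_1\cup D(0,R_0))$ for $|z|$ large, and that all points $\zeta$ of the disk satisfy $|\zeta|=(1+o(1))|z|$ (since the radius $2\sigma|z|^{-(d-1)}$ is tiny compared with $|z|$, as $d\ge 3$).

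The key steps, in order: (1) Fix $\varepsilon\in(0,1)$ small and take $R_0$ large enough for Lemma \ref{lemma_approx} to apply with this $\varepsilon$; since $2\sigma|z|^{-(d-1)}\to0$, for $|z|$ large the disk $D(z,2\sigma|z|^{-(d-1)})$ is contained in $\dC\setminus(E_1\cup D(0,R_0))$, hence in one component $V$ on which Lemma \ref{lemma_approx} gives a common index $m$. (2) From the estimate for $f'$ in Lemma \ref{lemma_approx}, $f'(\zeta)=(1+o(1))\,db_m\zeta^{d-1}Q_m(\zeta)\exp(b_m\zeta^d+P_m(\zeta))$ on $V$, which is nonzero for $|\zeta|$ large (once $\zeta$ is past the zeros of $Q_m$); this handles the non-vanishing of $f'$. (3) Combining the estimates for $f'$ and $f''$ in Lemma \ref{lemma_approx}, for $\zeta\in V$ with $|\zeta|$ large we get
\begin{equation*}
\frac{f''(\zeta)}{f'(\zeta)}=\frac{(1+o(1))\,d^2b_m^2\zeta^{2d-2}Q_m(\zeta)\exp(b_m\zeta^d+P_m(\zeta))}{(1+o(1))\,db_m\zeta^{d-1}Q_m(\zeta)\exp(b_m\zeta^d+P_m(\zeta))}=(1+o(1))\,db_m\zeta^{d-1},
\end{equation*}
so $|f''(\zeta)/f'(\zeta)|\le 2d\max_j|b_j|\,|\zeta|^{d-1}$ for $|\zeta|$ large. (4) Finally, for $\zeta\in D(z,2\sigma|z|^{-(d-1)})$ we have $|\zeta|\le 2|z|$ say (for $|z|$ large), so $|f''(\zeta)/f'(\zeta)|\le 2d\max_j|b_j|\,(2|z|)^{d-1}$; this is not quite what we want directly, so instead use $|\zeta|\le(1+o(1))|z|$ to obtain $|f''(\zeta)/f'(\zeta)|\le (1+o(1))\,d\max_j|b_j|\,|z|^{d-1}$. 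Since $\sigma<\tfrac{1}{4d\max_j|b_j|}$, we have $\tfrac{1}{2\sigma}|z|^{d-1}>2d\max_j|b_j|\,|z|^{d-1}$, and the factor $(1+o(1))$ is eventually below $2$, so the supremum bound $\sup_{\zeta\in D(z,2\sigma|z|^{-(d-1)})}|f''(\zeta)/f'(\zeta)|<1/r$ holds for $|z|$ sufficiently large. Lemma \ref{lemma_injectivity_criterion} then gives injectivity of $f$ in $D(z,2\sigma|z|^{-(d-1)})$.

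The only mild subtlety — the "main obstacle" — is bookkeeping the $o(1)$ and constant factors so that the strict inequality $\sup|f''/f'|<1/r$ comes out with room to spare: one wants the bound on $|f''/f'|$ to be at most roughly $2d\max_j|b_j|\,|z|^{d-1}$ while $1/r=\tfrac{1}{2\sigma}|z|^{d-1}>2d\max_j|b_j|\,|z|^{d-1}$, which forces the choice $\sigma<\tfrac{1}{4d\max_j|b_j|}$ in the hypothesis (the factor $2$ in the radius $2\sigma|z|^{-(d-1)}$ is what eats the extra factor $2$). Choosing $\varepsilon$ small at the start and then sending $|z|\to\infty$ absorbs all the error terms; nothing here requires any hypothesis on the arguments of the $b_j$ beyond what is already built into $E_1$ via Lemma \ref{lemma_approx}.
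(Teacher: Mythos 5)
Your proof is correct and follows essentially the same route as the paper: use Lemma \ref{lemma_approx} to show $f''/f' = (1+o(1))\,db_m\zeta^{d-1}$ on the disk, then feed the resulting bound into the univalence criterion of Lemma \ref{lemma_injectivity_criterion}. The paper states the bound with $|b_m|$ in place of $\max_j|b_j|$, but since the hypothesis on $\sigma$ is in terms of $\max_j|b_j|$ the conclusion is the same either way.
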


\begin{proof}
Let $r:=2\sigma|z|^{-(d-1)}$. By Lemma \ref{lemma_approx}, there exists $m\in\{1,...,N\}$ such that
$$\sup_{|\zeta-z|<r}\left|\frac{f''(\zeta)}{f'(\zeta)}\right|\le\frac{3}{2}d|b_m|\sup_{|\zeta-z|<r}|\zeta|^{d-1}\le2d|b_m||z|^{d-1}<\frac{1}{r},$$
if $|z|$ is sufficiently large. Thus, $f$ is injective in $D(z,r)$ by Lemma \ref{lemma_injectivity_criterion}.
\end{proof}

For $z\in\dC$ and $A\subset\dC$ let $\dist(z,A):=\inf\{|z-a|:\,a\in A\}$ denote the Euclidean distance of $z$ and $A$.

\begin{lemma}  \label{lemma_dist}
There is a constant $C_1>0$ such that, if $z\in\dC\setminus E_2$ and $|z|$ is sufficiently large, then 
$$\dist(z,E_1)\ge C_1|z|^{-3/2}.$$
\end{lemma}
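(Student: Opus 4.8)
The goal is to bound the distance from a point $z \notin E_2$ to the set $E_1$ from below. Recall $E_l = \bigcup_{j\ne k} P_{j,k}^{-1}(U_l)$, so it suffices to show that for each fixed pair $(j,k)$, if $z\notin P_{j,k}^{-1}(U_2)$ and $|z|$ is large, then $\dist(z, P_{j,k}^{-1}(U_1)) \ge c|z|^{-3/2}$ for some constant $c>0$; taking the minimum of finitely many such constants gives $C_1$. So the plan is to fix one polynomial $P := P_{j,k}$ of degree $d$ and work with the single preimage $P^{-1}(U_1)$ versus the single exclusion $z\notin P^{-1}(U_2)$.

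The key idea is that $U_1$ and $U_2$ are separated in the following quantitative sense: if $w \in U_1$ and $w' \notin U_2$, then $|\re(w') - \re(w)| \ge |\re(w')| - |\re(w)| > 2|w'|^{\nu/d} - |w|^{\nu/d}$, and since $w$ near $w'$ forces $|w|$ comparable to $|w'|$, this difference is bounded below by a positive multiple of $|w'|^{\nu/d}$; in particular $|w - w'| \ge c|w'|^{\nu/d}$. Now take any $\zeta \in P^{-1}(U_1)$ with $|\zeta - z|$ small (say at most $1$); then $w := P(\zeta) \in U_1$ and $w' := P(z) \notin U_2$, and $|P(z)|$ is of order $|z|^d$ since $\deg P = d$ and $|z|$ is large, so $|w'|^{\nu/d}$ is of order $|z|^{\nu} = |z|^{d - 5/2}$. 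On the other hand, $|P(z) - P(\zeta)| \le (\sup_{|\xi - z|\le 1}|P'(\xi)|)\,|z - \zeta| = O(|z|^{d-1})\,|z-\zeta|$. Combining the lower bound $|P(z) - P(\zeta)| \gtrsim |z|^{d - 5/2}$ with this upper bound gives $|z - \zeta| \gtrsim |z|^{d-5/2}/|z|^{d-1} = |z|^{-3/2}$, which is exactly the claimed estimate. One must also handle the trivial case $\dist(z, P^{-1}(U_1)) > 1$ separately, where the bound is immediate for large $|z|$.

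The main obstacle — really the only place care is needed — is making rigorous the step "$|\zeta - z|$ small implies $|P(\zeta)|$ comparable to $|P(z)|$, hence $|w|^{\nu/d}$ comparable to $|w'|^{\nu/d}$," so that the separation $2|w'|^{\nu/d} - |w|^{\nu/d}$ is genuinely a positive multiple of $|w'|^{\nu/d}$ rather than potentially negative. This is handled by restricting attention to $\zeta$ with $|\zeta - z| \le |z|^{-3/2}$ (we may assume the infimum defining $\dist(z,E_1)$ is attained at such a $\zeta$, else we are already done): then for large $|z|$ both $|\zeta|$ and $|P(\zeta)|$ lie within, say, a factor $2$ of $|z|$ and $|P(z)|$ respectively, so $|w|^{\nu/d} \le 2^{\nu/d}|w'|^{\nu/d} < 2|w'|^{\nu/d}$, giving $|\re(w') - \re(w)| > (2 - 2^{\nu/d} \cdot \tfrac{1}{1})$-type positive slack — more simply, $2|w'|^{\nu/d} - |w|^{\nu/d} \ge 2|w'|^{\nu/d} - 2^{\nu/d}|w'|^{\nu/d} =: c_0 |w'|^{\nu/d}$ with $c_0 > 0$ provided $2^{\nu/d} < 2$, which holds since $\nu/d = 1 - 5/(2d) < 1$. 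The remaining computations (the polynomial growth estimates $|P(z)| \asymp |a_d||z|^d$ and $\sup_{|\xi-z|\le 1}|P'(\xi)| = O(|z|^{d-1})$ for large $|z|$) are routine.
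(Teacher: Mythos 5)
Your proof is correct and follows essentially the same strategy as the paper: lower-bound $|P_{j,k}(z)-P_{j,k}(\zeta)|$ by a constant multiple of $|z|^{d-5/2}$ using the quantitative separation of $U_1$ and $U_2$, upper-bound it by $O(|z|^{d-1})\,|z-\zeta|$ via a derivative estimate for the degree-$d$ polynomial $P_{j,k}$, and combine, after first disposing of the trivial case where $\zeta$ is far from $z$. Your reverse-triangle-inequality step $|\re(w')-\re(w)|\ge|\re(w')|-|\re(w)|$ is a small cosmetic improvement over the paper's version, which uses $|P_{j,k}(z)-P_{j,k}(w)|\ge\re(P_{j,k}(z))-\re(P_{j,k}(w))$ and thus implicitly normalizes $\re(P_{j,k}(z))\ge0$ (a harmless WLOG achieved by swapping the roles of $j$ and $k$).
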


The following Corollary is an immediate consequence of Lemma \ref{lemma_dist} and Lemma \ref{lemma_injectivity}.

\begin{cor}   \label{cor_injectivity}
If $z\in\dC\setminus E_2$ and $|z|$ is large, then $f$ is injective in $D(z, 2\sigma|z|^{-(d-1)})$,
for $\sigma$ as in Lemma \ref{lemma_injectivity}.
\end{cor}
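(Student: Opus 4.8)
The plan is to obtain Corollary~\ref{cor_injectivity} by feeding the distance bound of Lemma~\ref{lemma_dist} into the injectivity statement of Lemma~\ref{lemma_injectivity}; beyond quoting these two results the only thing to check is a comparison of powers of $|z|$. First I would observe that $U_1\subset U_2$, so $P_{j,k}^{-1}(U_1)\subset P_{j,k}^{-1}(U_2)$ for all $j\ne k$, and hence $E_1\subset E_2$. Consequently every $z\in\dC\setminus E_2$ already lies in $\dC\setminus E_1$.

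Next I would fix $\sigma$ as in Lemma~\ref{lemma_injectivity}, take $z\in\dC\setminus E_2$ with $|z|$ large, and set $r:=2\sigma|z|^{-(d-1)}$. By Lemma~\ref{lemma_dist} there is $C_1>0$ such that $\dist(z,E_1)\ge C_1|z|^{-3/2}$ once $|z|$ is sufficiently large. Hence $D(z,r)\subset\dC\setminus E_1$ as soon as $r\le C_1|z|^{-3/2}$, that is, as soon as $2\sigma\le C_1|z|^{d-5/2}$. Since $d\ge3$, the exponent satisfies $d-\frac{5}{2}\ge\frac{1}{2}>0$, so this inequality holds for all $|z|$ exceeding some threshold depending only on $d$, $\sigma$, and $C_1$. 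For such $z$ both hypotheses of Lemma~\ref{lemma_injectivity} are in force ($z\in\dC\setminus E_1$, and $D(z,r)\subset\dC\setminus E_1$), and that lemma yields that $f$ is injective in $D(z,r)=D(z,2\sigma|z|^{-(d-1)})$, as claimed.

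I do not expect any genuine obstacle here: the corollary is purely a combination of the two preceding lemmas, and the only point demanding attention is that the phrase ``$|z|$ is large'' has to be read as ``$|z|$ exceeds a bound large enough for Lemma~\ref{lemma_dist}, for Lemma~\ref{lemma_injectivity}, and for the inequality $2\sigma\le C_1|z|^{d-5/2}$ to hold simultaneously''. Collecting these finitely many lower bounds on $|z|$ completes the proof.
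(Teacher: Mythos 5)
Your proposal is correct and follows exactly the route the paper has in mind: the paper states the corollary is an immediate consequence of Lemma~\ref{lemma_dist} and Lemma~\ref{lemma_injectivity}, and you simply spell out the required inclusion $E_1\subset E_2$ and the power comparison $2\sigma|z|^{-(d-1)}\le C_1|z|^{-3/2}$ (valid since $d-\tfrac{5}{2}>0$ when $d\ge3$) that makes $D(z,2\sigma|z|^{-(d-1)})\subset\dC\setminus E_1$. Nothing is missing; you have merely made explicit what the paper leaves to the reader.
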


\begin{proof}[Proof of Lemma \ref{lemma_dist}]
Let $z\in\dC\setminus E_2$. It suffices to show that if $|z|$ is sufficiently large, then 
$$\dist(z, E_1\cap D(z,1))\ge C_1|z|^{-3/2}$$
 for some constant $C_1>0$. Let $w\in E_1\cap D(z,1)$. Then there are $j,k\in\{1,...,N\}$ with $j\ne k$ such that $|\re(P_{j,k}(w))|\le|P_{j,k}(w)|^{\nu/d}$. If $|z|$ is sufficiently large, then $|w|\le\left(\frac{6}{5}\right)^{1/\nu}|z|$ and
\begin{align*}
|P_{j,k}(z)-P_{j,k}(w)|&\ge\re(P_{j,k}(z))-\re(P_{j,k}(w))\\
&\ge2|P_{j,k}(z)|^{\nu/d}-|P_{j,k}(w)|^{\nu/d}\\
&\ge2\cdot\frac{4}{5}|b_j-b_k|^{\nu/d}|z|^\nu-\frac{6}{5}|b_j-b_k|^{\nu/d}|w|^\nu\\
&\ge\frac{8}{5}|b_j-b_k|^{\nu/d}|z|^\nu-\left(\frac{6}{5}\right)^2|b_j-b_k|^{\nu/d}|z|^\nu\\
&=\frac{4}{25}|b_j-b_k|^{\nu/d}|z|^{d-5/2}.
\end{align*}
On the other hand,
\begin{align*}
|P_{j,k}(z)-P_{j,k}(w)|&=\left|\int_{w}^z P_{j,k}'(\zeta)\,d\zeta\right|\le\sup_{\zeta\in D(z,1)}|P_{j,k}'(\zeta)|\cdot|z-w|\\
&\le2d|b_j-b_k|(|z|+1)^{d-1}|z-w|\,\le4d|b_j-b_k||z|^{d-1}|z-w|.
\end{align*}
Thus,
$$|z-w|\ge\frac{|b_j-b_k|^{\nu/d-1}}{25d}|z|^{-3/2}\ge C_1|z|^{-3/2}$$
for $C_1=\dfrac{\min_{l\ne n}|b_l-b_n|^{\nu/d-1}}{25d}$.
\end{proof}

\section{Proof of Theorem \ref{thm_meas_detailed}}  \label{sec_proof}

In this section, we prove Theorem \ref{thm_meas_detailed}. First, we collect several results that we require. 
For $\alpha>0$ consider the function 
$$E_\alpha:[0,\infty)\to[0,\infty),\, E_\alpha(x)=\exp(x^\alpha).$$
We will use the following result \cite[Lemma 2.1]{bergweiler}.

\begin{lemma}   \label{lemma_iterate_exp}
Let $\beta>\alpha>0$. Then there exists $x_0>0$ such that 
$$E_\alpha^k(x)\ge E_\beta^{k-2}(x)\quad\text{for all } k\ge4\text{ and }x\ge x_0.$$
\end{lemma}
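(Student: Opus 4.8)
The plan is to prove, by induction on $k$, a statement slightly stronger than the one asserted, carrying a fixed polynomial ``cushion''. Fix an integer $C>\beta/\alpha$ — possible since $\beta/\alpha$ is a fixed positive real — and show that, once $x_0$ is chosen large enough,
$$E_\alpha^k(x)\ge\bigl(E_\beta^{k-2}(x)\bigr)^{C}\qquad\text{for all }k\ge2\text{ and }x\ge x_0.$$
Since $C\ge1$ and $E_\beta^{k-2}(x)\ge x\ge x_0\ge1$, this immediately yields $E_\alpha^k(x)\ge E_\beta^{k-2}(x)$, in particular for every $k\ge4$, which is the claim. The reason for dragging along the extra power $C$ is that the naive approach — using $E_\alpha^2(x)\ge E_\beta(x)$ for large $x$ and repeatedly trading two factors $E_\alpha$ for one factor $E_\beta$ — only produces roughly $E_\alpha^k(x)\ge E_\beta^{\lfloor k/2\rfloor}(x)$, which is far too weak. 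What one really needs is that each application of $E_\alpha$, once its argument is large enough, already beats an application of $E_\beta$; the polynomial cushion is precisely what survives the outermost exponential and keeps this mechanism running.

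First I would fix $x_0$ so large that three things hold: (i) $E_\beta(y)\ge y$ for all $y\ge x_0$, so that all iterates $E_\beta^{j}$ are nondecreasing and satisfy $E_\beta^{j}(x)\ge x\ge x_0$ on $[x_0,\infty)$; (ii) $x_0\ge C^{1/(C\alpha-\beta)}$, which is legitimate because $C\alpha-\beta>0$; and (iii) the base case $E_\alpha^{2}(x)=\exp(\exp(\alpha x^{\alpha}))\ge x^{C}$ for $x\ge x_0$, which is clear since $\exp(\alpha x^{\alpha})/\log x\to\infty$. For the inductive step I would take $k\ge3$, assume $E_\alpha^{k-1}(x)\ge\bigl(E_\beta^{k-3}(x)\bigr)^{C}$ for all $x\ge x_0$, fix $x\ge x_0$, and abbreviate $t:=E_\beta^{k-3}(x)$, noting $t\ge x\ge x_0$ by (i). Then
$$E_\alpha^{k}(x)=\exp\bigl((E_\alpha^{k-1}(x))^{\alpha}\bigr)\ge\exp\bigl(t^{C\alpha}\bigr)\qquad\text{and}\qquad\bigl(E_\beta^{k-2}(x)\bigr)^{C}=\exp\bigl(C\,(E_\beta^{k-3}(x))^{\beta}\bigr)=\exp\bigl(C\,t^{\beta}\bigr),$$
so it suffices to check $t^{C\alpha}\ge C\,t^{\beta}$, i.e.\ $t^{C\alpha-\beta}\ge C$, which is exactly what (ii) guarantees since $t\ge x_0$. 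This closes the induction, and the conclusion follows as explained.

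The genuinely delicate point — and the one I would be most careful about — is conceptual rather than computational: recognising that the inequality one must actually propagate through the recursion is the strengthened version with the exponent $C>\beta/\alpha$, not the verbatim statement. This also demystifies the two-step loss in the lemma: those two ``wasted'' iterations are exactly what give the iterate of $E_\alpha$ enough of a head start that its argument is large enough for the polynomial comparison $t^{C\alpha}\ge C\,t^{\beta}$ to take effect. Once the correct induction hypothesis is in place, every remaining estimate is routine, and one only needs $x_0$ depending on $\alpha$ and $\beta$ (through $C$), as required.
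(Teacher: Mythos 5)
The paper does not prove this lemma itself; it is quoted as \cite[Lemma 2.1]{bergweiler}, so there is no in-paper argument to compare against. Your proof is nevertheless correct and self-contained: carrying the polynomial cushion $(\cdot)^C$ with an integer $C>\beta/\alpha$ through the induction, i.e.\ proving $E_\alpha^k(x)\ge\bigl(E_\beta^{k-2}(x)\bigr)^C$ for all $k\ge2$ and $x\ge x_0$, is exactly the right strengthening, and your diagnosis that the naive ``trade two applications of $E_\alpha$ for one of $E_\beta$'' only yields $E_\beta^{\lfloor k/2\rfloor}$ correctly identifies why a stronger invariant must be propagated. The base case $\exp(\exp(\alpha x^\alpha))\ge x^C$, the reduction of the inductive step to $t^{C\alpha-\beta}\ge C$ with $t=E_\beta^{k-3}(x)\ge x_0$, and the simultaneous achievability of conditions (i)--(iii) for large $x_0$ are all sound; your version is in fact slightly stronger than the lemma, holding already for $k\ge2$.
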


The next Lemma is due to Sixsmith \cite[Theorem 3.1]{sixsmith}.

\begin{lemma}   \label{lemma_Julia-criterion}
Let $h$ be a transcendental entire function and $z_0\in I(h)$. Let $z_n:=h^n(z_0)$ for all $n\in\dN$. Suppose that there exist $\lambda>1$ and $N\ge0$ such that 
$$h(z_n)\ne0\quad\text{and}\quad\left|z_n\frac{h'(z_n)}{h(z_n)}\right|\ge\lambda\quad\text{for all }n\ge N.$$
Then either $z_0$ is in a multiply connected Fatou component of $h$, or $z_0\in J(h)$.
\end{lemma}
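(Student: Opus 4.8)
The plan is the following. Assume, towards a contradiction, that $z_0\in F(h)$, that the Fatou component $U_0$ containing $z_0$ is simply connected, and that $\lambda>1$ and $N$ are as in the statement. I will deduce that some Fatou component equals $\dC$, which is impossible; this shows that in fact $z_0\in J(h)$ or $z_0$ lies in a multiply connected Fatou component. For $n\ge 0$ let $U_n$ be the Fatou component containing $z_n$. Since $F(h)$ is forward invariant, $h(U_n)\subseteq U_{n+1}$, and since the forward image of a simply connected Fatou component is contained in a simply connected one (a known property of Fatou components of transcendental entire functions), every $U_n$ is simply connected. Each $U_n$ is also hyperbolic: it is not all of $\dC$, and it omits at least two points, for otherwise $J(h)$ would be finite. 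Let $\rho_{U_n}$ be the density of the hyperbolic metric of $U_n$.

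The key inequality is Schwarz--Pick applied to the holomorphic map $h\colon U_n\to U_{n+1}$, which requires no injectivity:
$$\rho_{U_{n+1}}(z_{n+1})\,|h'(z_n)|\le\rho_{U_n}(z_n).$$
From $h(z_n)\ne0$ and $|z_nh'(z_n)/h(z_n)|\ge\lambda$ for $n\ge N$ we get $|h'(z_n)|\ge\lambda|z_{n+1}|/|z_n|$, and inserting this above yields $\lambda\,b_{n+1}\le b_n$ for $n\ge N$, where $b_n:=|z_n|\,\rho_{U_n}(z_n)$. Hence $b_n\le\lambda^{-(n-N)}b_N\to0$ as $n\to\infty$.

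Now simple connectivity enters through the Koebe one-quarter theorem, which provides a universal constant $c_0>0$ with $\rho_{U_n}(z_n)\ge c_0/\dist(z_n,\partial U_n)$. Therefore
$$\frac{\dist(z_n,\partial U_n)}{|z_n|}\ge\frac{c_0}{b_n}\longrightarrow\infty,$$
so $\dist(z_n,\partial U_n)>|z_n|$ for all large $n$, which forces $0\in D(z_n,\dist(z_n,\partial U_n))\subseteq U_n$. Since $0$ lies in at most one Fatou component, all these $U_n$ coincide with a single component $V$. As $z_0\in I(h)$ we have $|z_n|\to\infty$, and for large $n$ also $\dist(z_n,\partial V)>2|z_n|$, so $V\supseteq D(z_n,\dist(z_n,\partial V))\supseteq D(0,|z_n|)$; letting $n\to\infty$ gives $V=\dC$, the desired contradiction.

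I expect the only non-routine ingredient to be the topological fact used above: that simple connectivity propagates along $U_0,U_1,U_2,\dots$, equivalently, that a Fatou component whose image lies in a multiply connected Fatou component is itself multiply connected. This is precisely where the ``multiply connected'' alternative in the statement is unavoidable, since the Koebe-type estimate $\rho_U\ge c_0/\dist(\cdot,\partial U)$ genuinely fails for multiply connected $U$ --- for instance at a point near the core curve of a very thick round annulus, where $\rho_U\cdot\dist(\cdot,\partial U)$ can be made arbitrarily small --- so the implication $b_n\to0\Rightarrow\dist(z_n,\partial U_n)/|z_n|\to\infty$ would collapse. Everything else is a direct application of Schwarz--Pick and Koebe, the only minor points being that each $U_n$ is hyperbolic and that $z_n\to\infty$.
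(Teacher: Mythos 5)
The paper does not prove this lemma; it cites Sixsmith, so there is no in-paper argument to compare against. Judged on its own terms, your Schwarz–Pick/Koebe scheme is a natural and clean route, and almost every step is correct: the Schwarz–Pick contraction $\rho_{U_{n+1}}(z_{n+1})\,|h'(z_n)|\le\rho_{U_n}(z_n)$, the substitution $|h'(z_n)|\ge\lambda|z_{n+1}|/|z_n|$ giving $\lambda b_{n+1}\le b_n$, the Koebe bound $\rho_{U_n}(z_n)\ge c_0/\dist(z_n,\partial U_n)$ for simply connected $U_n$, the conclusion that $0\in U_n$ for large $n$ so the $U_n$ stabilize to a single component $V$, and finally $D(0,|z_n|)\subset V$ forcing $V=\dC$. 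Each of these is sound, and the hyperbolicity of the $U_n$ is fine (complements contain $J(h)$, which is infinite).

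The problem is the single step you flag yourself and then dismiss as ``a known property'': that the Fatou component $U_{n+1}$ containing $h(U_n)$ is simply connected whenever $U_n$ is. This is not, to my knowledge, a citable standard fact, and it is where a genuine gap sits. The natural argument would be Riemann--Hurwitz: if $h|_{U_n}\colon U_n\to U_{n+1}$ is proper of degree $d$, then $\chi(U_n)=d\,\chi(U_{n+1})-r$ with $r\ge0$, and $\chi(U_n)=1$ forces $\chi(U_{n+1})=1$. But properness of $h$ restricted to a Fatou component is not automatic for transcendental entire functions; $U_n$ may be unbounded while $U_{n+1}$ (if multiply connected, hence bounded by Baker) is not, and a simply connected unbounded domain can map onto a multiply connected bounded one with infinite degree, as $\exp$ does on a horizontal strip. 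What is known (Bergweiler--Rohde, Herring) is only that $U_{n+1}\setminus h(U_n)$ has at most one point, which does not give properness. So you need either a reference that actually establishes the forward-propagation of simple connectivity for Fatou components, or a dynamical argument replacing it (for instance showing that if some $U_n$ were multiply connected then $U_0$ would also have to be, which is precisely the same assertion read backwards). As written, this is the one non-routine ingredient and it is not discharged; everything downstream depends on it, because once a multiply connected $U_n$ appears, all subsequent ones are multiply connected as well and the Koebe lower bound for $\rho_{U_n}$ fails, as you correctly note.

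One minor remark: for the application in this paper the issue is harmless, since Lemma \ref{lemma_mult-conn} (Zheng) rules out multiply connected Fatou components for the relevant exponential polynomials, so in context every $U_n$ is automatically simply connected. But the lemma as stated is general, and a proof of it must handle the general case.
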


The following result is due to Zheng \cite[Corollary 6 and Remark (J)]{zheng}.

\begin{lemma}   \label{lemma_mult-conn}
Let $h$ be a transcendental entire function of the form 
$$h(z)=\sum_{j=1}^Mq_j(z)\exp(p_j(z)),$$
with polynomials $p_j$ and $q_j$.  Then the Fatou set of $h$ has no multiply connected components.
\end{lemma}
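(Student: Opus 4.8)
The plan is to assume that $h$ has a multiply connected Fatou component $U$ and derive a contradiction, using the classical structure theory of such components together with the (very regular) distribution of zeros of exponential polynomials.

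I would first dispose of a degenerate case. If $h$ has only finitely many zeros, then by Hadamard's factorisation theorem $h(z)=P(z)e^{Q(z)}$ with $P,Q$ polynomials; such a function has finitely many singular values, hence belongs to the Eremenko--Lyubich class $\cB$, for which $I(h)\subseteq J(h)$. As every multiply connected Fatou component is contained in $I(h)$, this settles that case. So from now on assume $h$ has infinitely many zeros, and write $\rho:=\max_j\deg p_j\ (\geq1)$ for the order of $h$. The classical description of the zeros of exponential sums --- they lie in finitely many logarithmic strips about finitely many rays, with counting function $n(r,0,h)\asymp r^{\rho}$ --- shows that there is a constant $\lambda>1$ and an $r_0>0$ such that $h$ has a zero in the annulus $\{z:\,r<|z|<\lambda r\}$ for every $r\geq r_0$.

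Now suppose $U$ is a multiply connected Fatou component of $h$, and let $U_n$ denote the Fatou component containing $h^n(U)$. By the structure theory of multiply connected Fatou components (Baker; see also the refinements of Bergweiler, Kisaka and Rippon--Stallard): $U$ is a bounded wandering domain contained in $I(h)$; $\dist(0,U_n)\to\infty$; and for all large $n$ the component $U_n$ separates $0$ from $\infty$ and contains a round annulus $A_n=\{z:\,s_n<|z|<t_n\}$ about the origin with $t_n/s_n\to\infty$ (so necessarily $s_n\geq\dist(0,U_n)\to\infty$, since $A_n\subseteq U_n$). Fix $n$ so large that $s_n\geq r_0$, that $t_n/s_n>\lambda$, and that $U_{n+1}$ separates $0$ from $\infty$. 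Then $h$ has a zero $z^{*}$ with $s_n<|z^{*}|<\lambda s_n<t_n$, hence $z^{*}\in A_n\subseteq U_n$. Since $h$ maps the connected set $U_n\subseteq F(h)$ into a single component of $F(h)$, and $h(U_n)\supseteq h^{n+1}(U)$, that component is $U_{n+1}$; thus $0=h(z^{*})\in h(U_n)\subseteq U_{n+1}$. This is impossible, because $U_{n+1}$ separates $0$ from $\infty$, so $0\notin U_{n+1}$. Hence $h$ has no multiply connected Fatou component.

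Two ingredients carry the weight of the argument. The first is the structure theorem for multiply connected Fatou components --- boundedness, escape of the whole component, and the presence inside $U_n$ of round annuli around $0$ whose ratio of radii tends to $\infty$ --- which is standard but not elementary, and which I would cite rather than reprove. The second, and the point where I expect the only genuine care to be needed in a full write-up, is the fact, particular to exponential polynomials, that their zeros are distributed regularly enough to meet every sufficiently fat annulus at large radius; this can be extracted from the classical theory of exponential sums, or proved directly by an argument-principle estimate along the rays where two of the terms $q_je^{p_j}$ have equal modulus. Note that, unlike the results elsewhere in the paper, this argument needs no hypothesis on the arguments of the leading coefficients of the $p_j$.
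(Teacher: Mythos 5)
The paper does not actually prove this lemma; it quotes it from Zheng \cite[Corollary~6 and Remark~(J)]{zheng}, so there is no ``paper proof'' to compare against. Your argument should therefore be judged on its own terms, and as a self-contained proof it is essentially correct.

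The two-case split is exactly the right move. In the degenerate case, $h=Pe^Q$ has finitely many singular values (finitely many critical values, and finitely many asymptotic values by Denjoy--Carleman--Ahlfors since $h$ has finite order), so $h\in\cS\subset\cB$; then $I(h)\subseteq J(h)$, while every multiply connected Fatou component escapes, so none can exist. In the main case you combine the Bergweiler--Rippon--Stallard annulus theorem for multiply connected Fatou components with the density of zeros of $h$, obtaining $0\in h(U_n)\subseteq U_{n+1}$ while $U_{n+1}$ surrounds $0$, a clean contradiction; and your explanation of why $h(U_n)\subseteq U_{n+1}$ is correct.

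One small inaccuracy is worth fixing. The claim $n(r,0,h)\asymp r^\rho$ with $\rho=\max_j\deg p_j$ is not true in general: after writing $h=e^{p_1}\sum_j q_j e^{p_j-p_1}$, the zeros are governed by the differences $p_j-p_k$, not by the $p_j$ themselves. For example $h(z)=e^{z^3}(e^z+1)$ has order $3$ but only $n(r,0,h)\asymp r$. However, the exponent is always at least $1$ once $h$ has infinitely many zeros (if all differences $p_j-p_k$ were constant, $h$ would reduce to $Pe^Q$), and the only fact your argument actually uses is the existence of a fixed $\lambda>1$ such that every annulus $\{r<|z|<\lambda r\}$ with $r$ large contains a zero. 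This follows from $n(r)\asymp r^{\rho'}$ for any $\rho'\geq1$, so the slip is harmless. I would also phrase the BRS ingredient precisely: what you need is that for large $n$ the component $U_n$ contains a round annulus centred at $0$ whose modulus tends to infinity, which is indeed part of their structure theorem.

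Your route is plausibly close in spirit to Zheng's, which likewise exploits that a multiply connected Fatou component forces the function to be zero-free on large annuli; but your write-up leans on the later and more explicit Bergweiler--Rippon--Stallard annulus theorem rather than on Zheng's own estimates, so you are getting a clean modern proof of a result the paper only cites.
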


For a curve $\gamma\subset\dC$ we denote by $\ell(\gamma)$ the Euclidean length of $\gamma$.

\begin{lemma}   \label{lemma_meas_curve}
Let $\gamma\subset\dC$ be a curve of positive length, and let $s\in(0,\ell(\gamma))$. Then
$$\meas(\{z:\,\dist(z,\gamma)\le s\})\le\frac{9\pi}{2}s\cdot \ell(\gamma).$$
\end{lemma}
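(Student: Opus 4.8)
The plan is to cover the closed neighbourhood $N_s(\gamma):=\{z:\dist(z,\gamma)\le s\}$ (which is closed, hence measurable, since $z\mapsto\dist(z,\gamma)$ is $1$-Lipschitz) by a small number of disks of radius $\tfrac32 s$ and add up their areas. First I would dispose of the trivial case: if $\ell(\gamma)=\infty$ the right-hand side is infinite and there is nothing to prove, so I may assume $L:=\ell(\gamma)<\infty$, i.e.\ $\gamma$ is rectifiable, and parametrize it by arc length as a map $\gamma:[0,L]\to\dC$. This parametrization is $1$-Lipschitz, since a chord of $\gamma$ is no longer than the corresponding sub-arc.

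Next I would choose sample points. Set $n:=\lceil L/s\rceil$, partition $[0,L]$ into $n$ subintervals of equal length $L/n$, and let $z_k:=\gamma\bigl((k-\tfrac12)\tfrac Ln\bigr)$ for $k=1,\dots,n$ be the images of the midpoints. Every $t\in[0,L]$ lies within distance $\tfrac{L}{2n}$ of one of the midpoints $(k-\tfrac12)\tfrac Ln$, and $\tfrac{L}{2n}\le\tfrac s2$ because $n\ge L/s$. By the Lipschitz property, every point of $\gamma$ is therefore within Euclidean distance $\tfrac s2$ of some $z_k$. Since $\gamma([0,L])$ is compact, any $z\in N_s(\gamma)$ satisfies $|z-\gamma(t)|\le s$ for some $t$; combining this with the previous estimate gives $|z-z_k|\le\tfrac32 s$ for the appropriate $k$. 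Hence $N_s(\gamma)\subset\bigcup_{k=1}^n \overline{D(z_k,\tfrac32 s)}$.

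Finally I would just sum the areas: $\meas(N_s(\gamma))\le n\,\pi\bigl(\tfrac32 s\bigr)^2=\tfrac{9\pi}{4}\,n s^2$. Using $n=\lceil L/s\rceil\le L/s+1$ and then the hypothesis $s<\ell(\gamma)=L$ (so that $s^2<sL$) yields
$$\meas(N_s(\gamma))\le\tfrac{9\pi}{4}(sL+s^2)<\tfrac{9\pi}{4}\cdot 2sL=\tfrac{9\pi}{2}\,s\,\ell(\gamma),$$
which is the claimed bound.

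There is no serious obstacle here. The only points needing a little care are (a) the reduction to a rectifiable curve and the choice of an arc-length parametrization, and (b) pinning down the constant: one must cover with disks of radius $\tfrac32 s$ rather than $2s$, and use no more than $\lceil L/s\rceil$ of them rather than twice as many. The midpoint construction, together with the hypothesis $s<\ell(\gamma)$ (which forces $\lceil L/s\rceil\le L/s+1<2L/s$ and $s^2<sL$), is exactly what makes the constant come out to $\tfrac{9\pi}{2}$.
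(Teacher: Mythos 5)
Your proposal is correct and follows essentially the same route as the paper: partition the curve into $\lceil\ell(\gamma)/s\rceil$ arcs of length at most $s$, observe that the $s$-neighbourhood of each such arc lies in a disk of radius $\tfrac{3}{2}s$ centred at a suitable point of the arc, and sum the disk areas using $\lceil\ell(\gamma)/s\rceil\le\ell(\gamma)/s+1\le 2\ell(\gamma)/s$. Your explicit choice of the arc-length midpoints as centres is a slightly more careful justification of the paper's claim that each subcurve lies in a disk of radius $s/2$, but the argument and the constant $\tfrac{9\pi}{2}$ are the same.
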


\begin{proof}
Let $L\in\dN$ such that $L-1<\ell(\gamma)/s\le L$. We divide $\gamma$ into $L$ subcurves $\gamma_1,...,\gamma_L$ satisfying $\ell(\gamma_j)\le s$ for all $j\in\{1,...,L\}$. Then, for $j\in\{1,...,L\}$, there is an $a_j\in\dC$ such that $\gamma_j\subset\overline{D(a_j,s/2)}$. We have
$$\{z:\,\dist(z,\gamma_j)\le s\}\subset\overline{D\left(a_j,(3/2)s\right)}.$$
Thus,
$$\meas(\{z:\,\dist(z,\gamma_j)\le s\})\le\frac{9\pi}{4}s^2.$$
Using $(\ell(\gamma)/s)+1\le2\ell(\gamma)/s$, we get
$$\meas(\{z:\,\dist(z,\gamma)\le s\})\le L\frac{9\pi}{4}s^2<\left(\frac{\ell(\gamma)}{s}+1\right)\frac{9\pi}{4}s^2\le\frac{9\pi}{2}s\cdot\ell(\gamma).$$
\end{proof}

The next result is a direct consequence of the well-known Koebe distortion theorem (see, e.g., \cite[Theorem 1.6]{pommerenke}).

\begin{lemma}     \label{lemma_Koebe-distortion}
Let $z_0\in\dC$, $r>0$, and $\rho\in(0,1)$. Suppose that $h:D(z_0,r)\to\dC$ is holomorphic and injective. Then, for all $z\in D(z_0,\rho r)$,
$$\frac{|h'(z_0)|}{(1+\rho)^2}\le\frac{|h(z)-h(z_0)|}{|z-z_0|}\le\frac{|h'(z_0)|}{(1-\rho)^2}.$$
Moreover,
$$\frac{\max_{|z|\le\rho r}|h'(z)|}{\min_{|z|\le\rho r}|h'(z)|}\le\left(\frac{1+\rho}{1-\rho}\right)^4.$$
\end{lemma}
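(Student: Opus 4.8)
The plan is to deduce the statement from the classical Koebe growth and distortion theorem by an affine change of variables that normalizes $h$ on the unit disk $\dD$. First I would observe that a holomorphic injection on a domain has nowhere-vanishing derivative, so in particular $h'(z_0)\ne0$; hence
$$g(w):=\frac{h(z_0+rw)-h(z_0)}{r\,h'(z_0)}$$
defines a holomorphic injection $g\colon\dD\to\dC$ with $g(0)=0$ and $g'(0)=1$. Applying \cite[Theorem 1.6]{pommerenke} to $g$ gives, for $|w|=t<1$,
$$\frac{t}{(1+t)^2}\le|g(w)|\le\frac{t}{(1-t)^2},\qquad \frac{1-t}{(1+t)^3}\le|g'(w)|\le\frac{1+t}{(1-t)^3}.$$

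Next I would unwind the substitution. For $z\in D(z_0,\rho r)$ put $w:=(z-z_0)/r$, so that $|w|<\rho$ and $h(z)-h(z_0)=r\,h'(z_0)\,g(w)$, whence
$$\frac{|h(z)-h(z_0)|}{|z-z_0|}=|h'(z_0)|\cdot\frac{|g(w)|}{|w|}.$$
Since $u\mapsto 1/(1-u)^2$ is increasing and $u\mapsto 1/(1+u)^2$ is decreasing on $[0,1)$, the growth estimates for $g$ yield $\frac{1}{(1+\rho)^2}\le\frac{|g(w)|}{|w|}\le\frac{1}{(1-\rho)^2}$, which is the first assertion. For the second, differentiating $h(z)=h(z_0)+r\,h'(z_0)\,g((z-z_0)/r)$ gives $h'(z)=h'(z_0)\,g'((z-z_0)/r)$; using that $u\mapsto (1-u)/(1+u)^3$ is decreasing and $u\mapsto (1+u)/(1-u)^3$ is increasing on $[0,1)$, the distortion estimates give, for $z$ in the closed disk $\overline{D(z_0,\rho r)}$,
$$|h'(z_0)|\,\frac{1-\rho}{(1+\rho)^3}\le|h'(z)|\le|h'(z_0)|\,\frac{1+\rho}{(1-\rho)^3},$$
and dividing the upper bound by the lower one gives $\max|h'|/\min|h'|\le\big((1+\rho)/(1-\rho)\big)^4$.

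There is no real obstacle here: the argument is a routine reduction to a standard theorem. The only points that need attention are checking $h'(z_0)\ne0$ to justify the normalization (this also guarantees $\min_{\overline{D(z_0,\rho r)}}|h'|>0$, so the ratio in the last display is meaningful), keeping track of the direction of monotonicity when passing from the $t$-dependent Koebe bounds to the $\rho$-dependent ones, and making sure the normalization used matches the precise form of the Koebe theorem cited.
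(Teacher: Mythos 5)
Your reduction to the classical Koebe growth and distortion theorem via the normalization $g(w)=(h(z_0+rw)-h(z_0))/(r\,h'(z_0))$ is correct, and the monotonicity observations needed to pass from $|w|=t$ to $|w|\le\rho$ are handled properly. This is exactly the "direct consequence of the Koebe distortion theorem" that the paper asserts without spelling out, so you have simply filled in the standard details.
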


\begin{proof}[Proof of Theorem \ref{thm_meas_detailed}]
Let $B_0$ be a large open square centred at zero with sides parallel to the real and imaginary axis. Let $\tilde{\cS}$ be a collection of closed squares in $\dC$ with sides parallel to the real and imaginary axis such that
\begin{itemize}
\item $\bigcup_{S\in\tilde{\cS}}S=\dC\setminus B_0$,
\item for all $S_1, S_2\in\tilde{\cS}$ with $S_1\ne S_2$, we have $S_1^\circ\cap S_2^\circ=\emptyset$, and
\item for all $S\in\tilde{\cS}$, the side length $s$ of $S$ satisfies 
$$\frac{\sigma}{4\sqrt{2}\min_{z\in S}|z|^{d-1}}\le s\le \frac{\sigma}{\sqrt{2}\max_{z\in S}|z|^{d-1}},$$
with $\sigma$ as in Lemma \ref{lemma_injectivity}.
\end{itemize}

If the side length of $B_0$ is sufficiently large, this can be achieved as follows. First, divide $\dC\setminus B_0$ into squares of a fixed size so that the side length of all squares satisfies the lower bound. If the side length $s$ of a square does not satisfy the upper bound, divide it into four squares of side length $s/2$, and then continue this procedure until the side length of the squares satisfies the upper bound.

Let $\cS$ be the collection of all $S\in \tilde{\cS}$ such that $\dist(S,E_1)>\dfrac{2\sigma}{\min_{z\in S}|z|^{d-1}}$. By Lemma \ref{lemma_dist} and the definition of $\cS$, 
\begin{equation}   \label{eq_location_S}
\dC\setminus (E_2\cup B_0)\subset\bigcup_{S\in\cS}S\subset\dC\setminus (E_1\cup B_0),
\end{equation}
if $B_0$ is sufficiently large.

Next, we construct a subset of $A(f)\cap J(f)$ as an intersection of nested sets. Fix a square $S_0\in\cS$. Let 
$$\cK_0:=\{S_0\}$$
and, for $n\in\dN$, let 
$$\cK_n:=\{T_n\subset S_0:\, f^n(T_n)\in\cS \text{ and } T_n\subset T_{n-1} \text{ for some } T_{n-1}\in\cK_{n-1}\}.$$
We first show that
$$T:=\bigcap_{n\in\dN_0}\left(\bigcup_{T_n\in\cK_n}T_n\right)\subset A(f)\cap J(f).$$
To do so, let $z\in T$. Then $f^n(z)\in\dC\setminus E_1$ for all $n\in\dN_0$. Let $\alpha\in(0,\nu)$ and $\beta>d$. Using Lemma \ref{lemma_flarge}, Lemma \ref{lemma_iterate_exp}, and the fact that $M(r,f)\le E_\beta(r)$ for all large $r$, we get 
$$|f^n(z)|\ge E_\alpha^n(|z|)\ge E_\beta^{n-2}(|z|)\ge M^{n-2}(|z|,f)$$
for all $n\ge4$, if $|z|$ is sufficiently large. This yields $z\in A(f)$. For $n\in\dN$, let $z_n:=f^n(z)$. By Lemma \ref{lemma_approx} and Lemma \ref{lemma_flarge},
$$\left|z_n\frac{f'(z_n)}{f(z_n)}\right|\ge\frac{1}{2}d\min_k|b_k||z_n|^d>2,$$
if $|z|$ is large. By Lemma \ref{lemma_Julia-criterion} and Lemma \ref{lemma_mult-conn}, $z\in J(f)$. Thus, $T\subset A(f)\cap J(f)$, provided the square $B_0$ is chosen sufficiently large. 

For $A\subset\dC$ define
$$\pack(A):=\{S\in\cS:\,S\subset A\}.$$
Moreover, for $A,B\subset\dC$ with $\meas(B)>0$, let
$$\dens(A,B):=\frac{\meas(A\cap B)}{\meas(B)}$$
denote the \textit{density} of $A$ in $B$. We will show that for any square $S\in\cS$,
$$\dens\left(f(S)\setminus\bigcup_{S'\in\pack(f(S))}S',\,f(S)\right)\le\exp\left(-\frac{1}{2}\min_{z\in S}|z|^\alpha\right),$$
where, as before, $0<\alpha<\nu$. See Figure \ref{fig_pack} for an illustration of $\pack(f(S))$.

\begin{figure}[ht]     
\centering 
\includegraphics[width=0.3\textwidth]{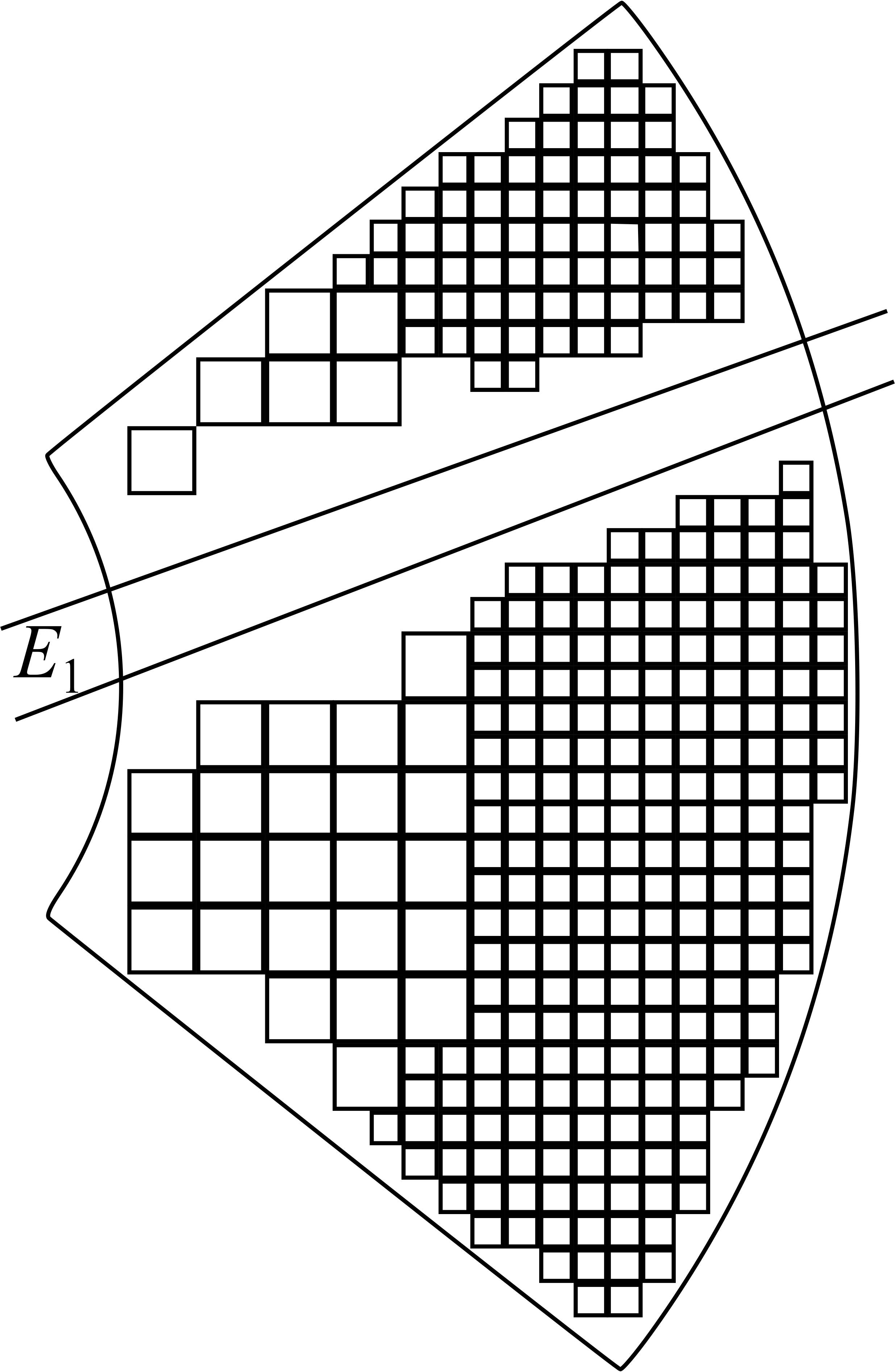}

\caption{An illustration of $\pack(f(S))$ (not to scale).}
\label{fig_pack}
\end{figure}

By Lemma \ref{lemma_injectivity}, $f$ is injective in $S$. We have
$$\meas(f(S))=\int_S|f'(z)|^2d(x,y)\ge\meas(S)\cdot\min_{z\in S}|f'(z)|^2.$$
Moreover, 
$$f(S)\subset\bigcup_{S'\in\tilde\cS}S'.$$ 
By \eqref{eq_location_S} and Lemma \ref{lemma_measE2}, the Lebesgue measure of the union of all squares $S'\in\tilde{\cS}\setminus\cS$ is at most $\meas(E_2)<\infty$. We now consider the union of all squares $S'\in\cS$ with $S'\cap\partial f(S)\ne\emptyset$. The length of $\partial f(S)$ satisfies
$$\ell(\partial f(S))=\int_{\partial S}|f'(z)|\,|dz|\le\max_{z\in S}|f'(z)|\ell(\partial S)\le\frac{4\sigma}{\sqrt{2}\max_{z\in S}|z|^{d-1}}\max_{z\in S}|f'(z)|.$$
Analogously,
$$\ell(\partial f(S))\ge\frac{\sigma}{\sqrt{2}\min_{z\in S}|z|^{d-1}}\min_{z\in S}|f'(z)|.$$
By Lemma \ref{lemma_flarge}, $|f'(z)|\ge\exp(|z|^\alpha)$ for all $z\in S$. In particular, $\ell(\partial f(S))>1$.
For $S'\in\cS$ with $S'\cap\partial f(S)\ne\emptyset$, we have 
$$S'\subset\{z:\,\dist(z,\partial f(S))\le1\}.$$
By Lemma \ref{lemma_meas_curve},
$$\meas(\{z:\,\dist(z,\partial f(S))\le1\})\le\frac{9\pi}{2}\cdot\ell(\gamma)\le\frac{18\pi\sigma}{\sqrt{2}\max_{z\in S}|z|^{d-1}}\max_{z\in S}|f'(z)|.$$
Altogether, we get
\begin{align*}
\meas\left(f(S)\setminus\bigcup_{S'\in\pack(f(S))}S'\right)&\le\meas(E_2)+\dfrac{18\pi\sigma}{\sqrt{2}\max_{z\in S}|z|^{d-1}}\max_{z\in S}|f'(z)|\\
&\le\max_{z\in S}|f'(z)|,
\end{align*}
provided the square $B_0$ is sufficiently large.
Thus,
$$\dens\left(f(S)\setminus\bigcup_{S'\in\pack(f(S))}S',\,f(S)\right)\le\frac{\max_{z\in S}|f'(z)|}{\meas(S)\cdot\min_{z\in S}|f'(z)|^2}.$$
Let $z_0$ be the centre of $S$. Then $S\subset D\left(z_0,(\sigma/2)|z_0|^{-(d-1)}\right)$, and by Lemma \ref{lemma_injectivity}, $f$ is injective in $D(z_0,2\sigma|z_0|^{-(d-1)})$.
By the Koebe distortion theorem (Lemma \ref{lemma_Koebe-distortion}) and Lemma \ref{lemma_flarge},
\begin{align}   \label{eq_dens_f}
\dens\left(f(S)\setminus\bigcup_{S'\in\pack(f(S))}S',\,f(S)\right)&\le\left(\frac{5}{3}\right)^4\frac{1}{\meas(S)\min_{z\in S}|f'(z)|}\nonumber\\
&\le\left(\frac{5}{3}\right)^4\frac{32\min_{z\in S}|z|^{2(d-1)}}{\sigma^2\exp\left(\min_{z\in S}|z|^\alpha\right)}\nonumber\\
&\le\exp\left(-\frac{1}{2}\min_{z\in S}|z|^\alpha\right),
\end{align}
if $B_0$ is large.

Let $n\in\dN_0$ and $T_n\in\cK_n$. Then $f^n(T_n)\in\cS$. By \eqref{eq_dens_f} applied to $S=f^n(T_n)$ and Lemma \ref{lemma_flarge},
\begin{align*}  
&\dens\left(f^{n+1}(T_n)\setminus\bigcup_{S'\in\pack(f^{n+1}(T_n))}S',\,f^{n+1}(T_n)\right)\\
&\qquad\le\exp\left(-\frac{1}{2}\min_{z\in f^n(T_n)}|z|^\alpha\right)
\le\displaystyle\exp\left(-\frac{1}{2}\Big(E_\alpha^n\Big(\min_{z\in S_0}|z|\Big)\Big)^\alpha\right).
\end{align*}

We use this to prove that the set $T_n\setminus\left(\bigcup_{T_{n+1}\in\cK_{n+1}}T_{n+1}\right)$ has small density in $T_n$.
For all $k\in\{1,...,n\}$, there is a square $S_k\in\cS$ such that $f^k(T_n)\subset S_k$. In particular, $f^{n+1}$ is injective in $T_n$. Thus,

\begin{align*}
&\meas\left(T_n\setminus \bigcup_{T_{n+1}\in\cK_{n+1}}T_{n+1}\right)\\
&\qquad\le\frac{1}{\displaystyle\min_{z\in T_n}|(f^{n+1})'(z)|^2}\cdot\meas\left(f^{n+1}(T_n)\setminus\bigcup_{S'\in\pack(f^{n+1}(T_n))}S'\right),
\end{align*}
and
$$\meas(T_n)\ge\frac{1}{\displaystyle\max_{z\in T_n}|(f^{n+1})'(z)|^2}\meas(f^{n+1}(T_n)).$$
Hence,
\begin{align}   \label{eq_dens_Tn}
&\dens\left(T_n\setminus \bigcup_{T_{n+1}\in\cK_{n+1}}T_{n+1},\,T_n\right)\nonumber\\
&\qquad\le\frac{\max_{z\in T_n}|(f^{n+1})'(z)|^2}{\min_{z\in T_n}|(f^{n+1})'(z)|^2}\dens\left(f^{n+1}(T_n)\setminus\bigcup_{S'\in\pack(f^{n+1}(T_n))}S',\,f^{n+1}(T_n)\right)\nonumber\\
&\qquad\le\left(\frac{\max_{z\in T_n}|(f^{n+1})'(z)|}{\min_{z\in T_n}|(f^{n+1})'(z)|}\right)^2\exp\left(-\frac{1}{2}\Big(E_\alpha^n\Big(\min_{z\in S_0}|z|\Big)\Big)^\alpha\right).
\end{align}
To estimate $\dfrac{\max_{z\in T_n}|(f^{n+1})'(z)|}{\min_{z\in T_n}|(f^{n+1})'(z)|}$, let $w_0\in f^k(T_n)$. Then
$$f^k(T_n)\subset S_k\subset\overline{D\left(w_0, \sigma|w_0|^{-(d-1)}\right)}.$$
By Lemma \ref{lemma_injectivity}, $f$ is injective in $D\left(w_0, 2\sigma|w_0|^{-(d-1)}\right)$. The Koebe distortion theorem (Lemma \ref{lemma_Koebe-distortion}) yields that, for all $w\in f^k(T_n)$, 
$$\frac{|f(w)-f(w_0)|}{|w-w_0|}\ge\frac{4}{9}|f'(w_0)|.$$
By Lemma \ref{lemma_flarge}, $|f'(w_0)|\ge5$. Thus,
$$\diam(f^{k+1}(T_n))>2\diam(f^k(T_n)).$$
Induction yields
\begin{align*}
\diam(f^k(T_n))&<\frac{1}{2^{n-k}}\diam(f^n(T_n))\le\frac{1}{2^{n-k}}\cdot\frac{\sigma}{\max_{z\in f^n(S_0)}|z|^{d-1}}\\
&\le\frac{1}{2^{n-k}}\cdot\frac{\sigma}{\max_{z\in f^k(S_0)}|z|^{d-1}}.
\end{align*}

In particular, for $z_k\in f^k(T_n)$, 
$$f^k(T_n)\subset\overline{D\left(z_k,\frac{2^{k-n}\sigma}{\max_{z\in f^k(S_0)}|z|^{d-1}}\right)}\subset\overline{D\left(z_k,2^{k-n}\sigma|z_k|^{-(d-1)}\right)}.$$
Since $f$ is injective in $D\left(z_k,2\sigma|z_k|^{-(d-1)}\right)$, the Koebe distortion theorem (Lemma \ref{lemma_Koebe-distortion}) yields
$$\frac{\max_{z\in f^k(T_n)}|f'(z)|}{\min_{z\in f^k(T_n)}|f'(z)|}\le\left(\frac{1+2^{k-n-1}}{1-2^{k-n-1}}\right)^4.$$
Since $(f^{n+1})'(z)=\prod_{k=0}^nf'(f^k(z))$, we get
\begin{align*}
\frac{\max_{z\in T_n}|(f^{n+1})'(z)|}{\min_{z\in T_n}|(f^{n+1})'(z)|}&\le\prod_{k=0}^n\Bigg(\frac{1+2^{k-n-1}}{1-2^{k-n-1}}\Bigg)^4=\Bigg(\prod_{j=1}^{n+1}\frac{1+2^{-j}}{1-2^{-j}}\Bigg)^4\\
&\le\Bigg(\prod_{j=1}^\infty \frac{1+2^{-j}}{1-2^{-j}}\Bigg)^4=:C_2,
\end{align*}
where $C_2\in(0,\infty)$. Together with \eqref{eq_dens_Tn}, this implies
$$\dens\left(T_n\setminus\bigcup_{T_{n+1}\in\cK_{n+1}}T_{n+1},\,T_n\right)\le C_2^2\exp\left(-\frac{1}{2}\Big(E_\alpha^n\Big(\min_{z\in S_0}|z|\Big)\Big)^\alpha\right).$$
Thus,
\begin{align*}
\meas(S_0\setminus T)&=\sum_{n=0}^\infty\sum_{T_n\in\cK_n}\meas\left(T_n\setminus\bigcup_{T_{n+1}\in\cK_{n+1}}T_{n+1}\right)\\
&=\sum_{n=0}^\infty\sum_{T_n\in\cK_n}\dens\left(T_n\setminus\bigcup_{T_{n+1}\in\cK_{n+1}}T_{n+1},\,T_n\right)\cdot\meas(T_n)\\
&\le\sum_{n=0}^\infty C_2^2\exp\left(-\frac{1}{2}\Big(E_\alpha^n\Big(\min_{z\in S_0}|z|\Big)\Big)^\alpha\right)\sum_{T_n\in\cK_n}\meas(T_n)\\
&\le C_2^2\sum_{n=0}^\infty\exp\left(-\frac{1}{2}\Big(E_\alpha^n\Big(\min_{z\in S_0}|z|\Big)\Big)^\alpha\right)\cdot\meas(S_0).
\end{align*}
For all large $x$, $\exp(\alpha x^\alpha)\ge x^\alpha+2\log2$, and thus
$$\exp\left(-\frac{1}{2}(E_\alpha(x))^\alpha\right)=\exp\left(-\frac{1}{2}\exp(\alpha x^\alpha)\right)\le\frac{1}{2}\exp\left(-\frac{1}{2}x^\alpha\right).$$
Induction yields
$$\sum_{n=0}^\infty \exp\left(-\frac{1}{2}\Big(E_\alpha^n\Big(\min_{z\in S_0}|z|\Big)\Big)^\alpha\right)\le\sum_{n=0}^\infty\frac{1}{2^n}\exp\left(-\frac{1}{2}\min_{z\in S_0}|z|^\alpha\right)=2\exp\left(-\frac{1}{2}\min_{z\in S_0}|z|^\alpha\right).$$
Thus,
$$\meas(S_0\setminus(A(f)\cap J(f)))\le\meas(S_0\setminus T)\le2C_2^2\exp\left(-\frac{1}{2}\min_{z\in S_0}|z|^\alpha\right)\meas(S_0).$$
To conclude that $\meas(\dC\setminus(A(f)\cap J(f)))<\infty$, fix $R>0$ such that $B_0\subset D(0,R/2)$. For $r\ge R$ define
$$\ann(r):=\{z:\,r\le|z|\le2r\}.$$
Then
$$\bigcup_{\substack{S_0\in\cS\\S_0\cap\ann(r)\ne\emptyset}}S_0\subset\left\{z:\,\frac{r}{2}\le|z|\le3r\right\}\subset\overline{D(0,3r)}.$$
We get
\begin{align*}
&\meas(\ann(r)\setminus((A(f)\cap J(f))\cup E_2))\le\sum_{\substack{S_0\in\cS\\S_0\cap\ann(r)\ne\emptyset}}2C_2^2\exp\left(-\frac{1}{2}\min_{z\in S_0}|z|^\alpha\right)\meas(S_0)\\
&\qquad\le2C_2^2\exp\left(-\frac{1}{2}\left(\frac{r}{2}\right)^\alpha\right)\sum_{\substack{S_0\in\cS\\S_0\cap\ann(r)\ne\emptyset}}\meas(S_0)\le2C_2^2\exp\left(-\frac{r^\alpha}{2^{1+\alpha}}\right)\meas(D(0,3r))\\
&\qquad =18\pi C_2^2 r^2\exp\left(-\frac{r^\alpha}{2^{1+\alpha}}\right)\le\exp\left(-\frac{r^\alpha}{2^{2+\alpha}}\right),
\end{align*}
if $r$ is sufficiently large. Applying this to the annuli $\ann(2^nR)$, $n\in\dN_0$, yields
\begin{align*}
&\meas(\dC\setminus(A(f)\cap J(f)))\\
&\quad\le\meas(D(0,R))+\meas(E_2)+\sum_{n=0}^\infty\meas(\ann(2^nR)\setminus((A(f)\cap J(f))\cup E_2))\\
&\quad\le\meas(D(0,R))+\meas(E_2)+\sum_{n=0}^\infty\exp\left(-\frac{R^\alpha}{2^{2+\alpha}}2^{n\alpha}\right)<\infty.
\end{align*}
This completes the proof.
\end{proof}

\section{Verification of the properties of Example \ref{ex_counterex}}   \label{sec_counterexample}

In this section, we consider the function
$$h(z)=\frac{1}{2}\exp(z^3+iz)-\frac{1}{2}\exp(-z^3+iz)=\exp(iz)\sinh(z^3)$$
given in Example \ref{ex_counterex}. Note that $h$ satisfies all assumptions of Theorem \ref{thm_meas_detailed} except for condition \eqref{eq_extra-condition}. Moreover, the function $h$ has a superattracting fixed point at zero. Recall that we want to prove that the attractive basin of zero has infinite Lebesgue measure, so that in particular the Lebesgue measure of $\dC\setminus(J(h)\cap A(h))$ is infinite.\\

To do so, fix $\varepsilon>0$ such that $D(0,\varepsilon)$ is contained in the attractive basin of zero. For large $r_0>1$, let 
$$B:=\left\{re^{i\theta}:\,r\ge r_0,\,\left|\theta-\frac{\pi}{2}\right|\le\frac{1}{r^2\log r}\right\}.$$
We have
$$\meas(B)=\int_{r_0}^\infty\int_{\frac{\pi}{2}-\frac{1}{r^2\log r}}^{\frac{\pi}{2}+\frac{1}{r^2\log r}}r\,d\theta d r=\int_{r_0}^\infty \frac{2}{r\log r} dr=2\int_{\log(r_0)}^\infty \frac{1}{u}\, du=\infty.$$
We now show that if $r_0$ is sufficiently large, then $h(B)\subset D(0,\varepsilon)$, and hence $B$ is contained in the attractive basin of zero.
Let $z=re^{i\theta}\in B$. Then
\begin{align*}
|h(z)|&\le|\exp(iz)|\cdot\frac{1}{2}\left(\left|\exp\left(z^3\right)\right|+\left|\exp\left(-z^3\right)\right|\right)\\
&=\exp\left(r\cos\left(\theta+\frac{\pi}{2}\right)\right)\cdot\frac{1}{2}\left(\exp\left(r^3\cos(3\theta)\right)+\exp\left(-r^3\cos(3\theta)\right)\right)\\
&\le\exp\left(r\cos\left(\theta+\frac{\pi}{2}\right)\right)\cdot\exp\left(r^3|\cos(3\theta)|\right).
\end{align*}
We have
$$\cos(w)=-1+\cO\left((w-\pi)^2\right)\qquad\text{as }w\to\pi,$$
and
$$\cos(w)=\left(w-\frac{3\pi}{2}\right)+\cO\left(\left(w-\frac{3\pi}{2}\right)^3\right)\qquad\text{as }w\to\frac{3\pi}{2}.$$
Since $\left|\theta-\dfrac{\pi}{2}\right|\le\dfrac{1}{r^2\log r}$, this implies
$$\cos\left(\theta+\frac{\pi}{2}\right)\le-\frac{1}{2}$$
and
$$|\cos(3\theta)|\le2\cdot\frac{3}{r^2\log r},$$
if $r$ is sufficiently large. 
Thus,
$$|h(z)|\le\exp\left(-\frac{1}{2}r\right)\cdot\exp\left(6\cdot \frac{r}{\log r}\right)\le\exp\left(-\frac{1}{4}r\right)<\varepsilon,$$
if $r$ is sufficiently large. So $h(B)\subset D(0,\varepsilon)\subset F(h)\setminus I(h)$, if $r_0$ is sufficiently large.

\paragraph{Acknowledgements} 
I would like to thank Walter Bergweiler for helpful suggestions.

\end{document}